\documentclass[a4paper,10pt]{article}


\usepackage[utf8]{inputenc}
\usepackage[english]{babel}

\usepackage{mathtools}
\usepackage{amsmath}
\numberwithin{equation}{section} 
\usepackage{amssymb}
\usepackage{amsthm}
\usepackage{cases}
\usepackage{stix}
\usepackage{bbm} 
\usepackage{pgf,tikz}
\usetikzlibrary{arrows}
\usepackage{xcolor}
\usepackage[symbol]{footmisc}
\renewcommand{\thefootnote}{\fnsymbol{footnote}}

\usepackage{textcomp}
\usepackage[pdftex,colorlinks]{hyperref}
\hypersetup{                    %
	      allcolors = blue}


\usepackage[textheight=9in, paperheight=11.25in, inner=3cm, outer=3cm]{geometry} 
\allowdisplaybreaks[2]

\sloppy
\hfuzz  = 0.5cm 

\overfullrule=4cm

\allowdisplaybreaks
\linespread{1}



\newcommand{\R}{\mathbb{R}}
\newcommand{\N}{\mathbb{N}}



\newcommand{\dd}{\partial}

\DeclareMathOperator{\Tr}{Tr}                      
\renewcommand{\bar}[1]{\overline{#1}}

\renewcommand{\hat}{\widehat}

\newcommand{\al}{\alpha}
\newcommand{\be}{\beta}
\newcommand{\dl}{\delta}

\newcommand{\eps}{\varepsilon}

\newcommand{\ld}{\lambda}

\renewcommand{\O}{\Omega}



\definecolor{gr}{rgb}   {0.,   0.69,   0.23 }
\definecolor{bl}{rgb}   {0.,   0.5,   1. }
\definecolor{mg}{rgb}   {0.85,  0.,    0.85}
\definecolor{gy}{rgb}   {0.8,  0.8,   0.8}
\definecolor{yl}{rgb}   {0.8,  0.7,   0.}
\definecolor{or}{rgb}  {0.7,0.2,0.2}

\newcommand{\ip}[2]{\left\langle #1,#2\right\rangle}          

\newcommand{\1}{\hspace{0.5mm}\text{I}\hspace{0.5mm}}


\newcommand{\noi}{\noindent}

\newcommand\blfootnote[1]{
  \begingroup
  \renewcommand\thefootnote{}\footnote{#1}%
  \addtocounter{footnote}{-1}%
  \endgroup
 }

\renewcommand{\tilde}{\widetilde}


\theoremstyle{definition} \newtheorem{de}{Definition}[section]
                          
\theoremstyle{remark}     
\theoremstyle{plain}      \newtheorem{thm}[de]{Theorem}
		          \newtheorem*{thm*}{Theorem}
\theoremstyle{plain}      \newtheorem{cor}[de]{Corollary}
\theoremstyle{plain}      \newtheorem{pr}[de]{Proposition}
\theoremstyle{plain}      \newtheorem{lem}[de]{Lemma}
\theoremstyle{remark}     \newtheorem{rmk}[de]{Remark}
\theoremstyle{remark}     
		          \newtheorem*{rmk*}{Remark}
		          
\theoremstyle{plain}


\date{}
\begin{document}


\title{Linear Dynamics of the Semi-Geostrophic Equations\\in Eulerian Coordinates on $\mathbb{R}^{3}$}
\author{Stefania Lisai\footnote{\textit{Correspoding author:} {Heriot-Watt University and the Maxwell Institute for Mathematical Sciences, Edinburgh, UK}, \href{mailto:s.lisai@sms.ed.ac.uk}{s.lisai@sms.ed.ac.uk}.} \hspace{1mm}and
Mark Wilkinson\footnote{{Department of Mathematics, Nottingham Trent University, Nottingham, UK}, \href{mailto:mark.wilkinson@ntu.ac.uk}{mark.wilkinson@ntu.ac.uk}.
}
}
\maketitle

\begin{abstract}
\noindent We consider a class of steady solutions of the semi-geostrophic equations on $\R^3$ and derive the linearised dynamics around those solutions. The linear PDE which governs perturbations around those steady states is a transport equation featuring a pseudo-differential operator of order 0. We study well-posedness of this equation in $L^2(\R^3;\R^3)$ introducing a representation formula for the solutions, and extend the result to the space of tempered distributions on $\mathbb{R}^{3}$. We investigate stability of the steady solutions by looking at plane wave solutions of the linearised problem, and discuss differences in the case of the quasi-geostrophic equations.
\end{abstract}
\blfootnote{\textit{2020 Mathematics Subject Classification}.  35A01, 35B35, 35Q86.}
\blfootnote{\textit{Keywords and phrases}. atmospheric/oceanic fluid dynamics; semi-geostrophic equations; quasi-geostrophic equations; linear stability.}

\renewcommand{\thefootnote}{\arabic{footnote}}

\section{Introduction}

The aim of this work is to derive and analyse the linearised semi-geostrophic equation (LSG in what follows) associated to a class of quadratic steady solutions of the semi-geostrophic equations (referred to as SG in what follows) on the whole space $\mathbb{R}^{3}$. We present a well-posedness theory for this equation, and discuss the stability of these quadratic steady states. 
For a given quadratic steady state $\overline{P}$ of SG, the associated LSG in the conservative perturbation $\nabla\psi$ is the passive transport equation 
\begin{equation}\label{eq:LSG}
\frac{\partial}{\partial t}\nabla\psi+(\overline{u}\cdot\nabla)\nabla \psi+S^{T}\nabla\psi=\mathscr{F}^{-1}\circ\mathscr{M}_{m}\circ\mathscr{F}[\nabla\psi],
\end{equation}
where $\overline{u}$ is a $\overline{P}$-dependent Eulerian velocity field, $\mathscr{F}^{-1}\circ\mathscr{M}_{m}\circ\mathscr{F}$ is a pseudo-differential operator of order 0 with $\overline P$-dependent symbol $m$, $S$ is a $\overline P$-dependent matrix and $\mathscr F$ is the Fourier transform, defined in \eqref{eq:def_FT}. 
Although the well-posedness of the problem in $L^2(\R^3,\R^3)$ follows from the theory of strongly continuous semigroups on Banach spaces, we provide a representation formula for the solution, which allows us to work with the solutions explicitly. We present and extend this result to the space of tempered distributions $\mathcal S'(\R^3,\R^3)$, to allow for the treatment of global plane-wave solutions, which are not integrable functions on the whole space $\R^3$.
We also introduce concepts of stability for this family of steady solutions of the semi-geostrophic equations, by studying the long-term behaviour of wave-like solutions, in analogy with the results of \cite{CC86:NS_planewaves} on the Navier--Stokes equations.  Finally, we briefly mention how the same methods can be applied to the quasi-geostrophic equations, and discuss similarities and differences between the two theories.

\subsection{Contributions of this Paper}

The main novelty of this paper is the introduction of an analytical framework in which one can analyse the linearised semi-geostrophic equations for steady solutions that are globally defined in space. We consider quadratic steady solutions, and this choice simplifies the derivation, as we will see in Section \ref{sec:LSG_deriv}, and it allows us to write the linearised equation in a very explicit manner: see \eqref{eq:LSG}. Our choice of steady solutions allows us to prove existence of solutions to LSG, and study the stability of the steady solutions, in a sense that we specify in Definition \ref{de:LSG_stability}. It is important to notice that the derivation of LSG is only formal here: a rigorous derivation would require a well-posedness theory for the semi-geostrophic equations on the whole space $\R^3$, which is still an open problem, to the best of the authors' knowledge.

The authors believe that the framework presented in this paper can be a first step towards the understanding the long-time behaviour of more general steady solutions of the semi-geostrophic equations, and offer an interesting first comparison between the behaviour of common steady solutions of the semi-geostrophic and the quasi-geostrophic model. In fact, in Section \ref{sec:LSG_LQG}, we discuss how the stability analysis expects different behaviour for some of the common steady solutions of the semi-geostrophic and the quasi-geostrophic equations.

To the best of the authors' knowledge, this is the first analysis of the stability of globally-defined steady solutions of the semi-geostrophic equations, although in the physics literature \cite{Ren98} analyses the linear stability of parallel basic flow in the baroclinic semi-geostrophic equations in geometric coordinates.
 
\subsection{The Semi-Geostrophic Equations in Eulerian Coordinates}

The semi-geostrophic equations can be written in various coordinate systems, depending on the analytical tools that we wish to employ (e.g. optimal transport theory) and the aim of our research (e.g. numerical simulation, well-posedness theory). The first formulation of SG in Eulerian coordinates was introduced in the 1950s in \cite{Eliassen48} and studied by Hoskins in the 1970s. However, the model attracted the interest of the mathematical community only in the late 1990s when Benamou and Brenier introduced in \cite{BB98} a new formulation, in a set of coordinates which are now known as \emph{geostrophic} or \emph{dual} coordinates, and showed that tools from the theory of optimal transport could be applied to study the well-posedness of the problem. Since then, the problem is typically studied in geostrophic coordinates, because of its connection to optimal transport: see, among the others, \cite{ACDF12:periodic} and \cite{ACDF12:convex}. We invite the reader to see \cite{LW19} for a review of the main formulations and results in the semi-geostrophic theory. 

For our purposes, we shall consider SG in Eulerian coordinates, which is the original setting in which the equations were derived and studied by Hoskins \cite{Hoskins75} in 1975. In Eulerian coordinates, SG takes the form of an active transport equation given by
\begin{equation}\label{eq:SG}
\left\{\begin{array}{l}
\displaystyle \frac{\partial}{\partial t} \nabla P + (\mathscr U[\nabla P]\cdot \nabla )\nabla P = J(\nabla P - \text{id}_{\O}),\vspace{2mm}\\
 \nabla P(\cdot,0)= \nabla P_0,
 \end{array}
 \right.
\end{equation}
in the unknown time-dependent conservative vector field $\nabla P:\Omega\times \R\to\R^3$, where $\Omega\subseteq\mathbb{R}^{3}$. In the above, the matrix $J\in\R^{3\times3}$ is given by
\begin{equation}\label{eq:def_J}
    J:= \begin{pmatrix}
            0&-1&0\\1&0&0\\0&0&0
        \end{pmatrix},
\end{equation}
the function $\text{id}_{\O}:x\mapsto x$ denotes the identity function on $\O$.
The operator $\mathscr U:\nabla P\mapsto u$ is the formal solution operator associated to the  boundary-value problem for the div-curl system on $\O$ given by
\begin{equation}\label{eq:divcurl}
    \left\{
        \begin{array}{l}
            \nabla \wedge (D^2 P \,u) = \nabla\wedge
            J(\nabla P-\text{id}_{\R^3}), \vspace{2mm}\\
            \nabla \cdot u=0.
        \end{array}
    \right.
\end{equation}
This boundary value problem is endowed with the usual no-slip condition $u\cdot n=0$ on $\partial\O$ if $\O$ is a bounded domain in $\R^3$. To guarantee well-posedness of $\mathscr{U}$ in the case when $\Omega=\mathbb{R}^{3}$, further conditions must be imposed on $u$. In this article, we are interested in the linear dynamics of SG around a special family of steady solutions in the whole space case $\Omega=\mathbb{R}^{3}$. This family of steady solutions is realised by globally-supported quadratic strictly convex functions: for a given symmetric positive definite matrix $A\in \text{Sym}^+_3(\R)\subset \R^{3\times3}$, we consider those steady solutions $\overline{P}$ of \eqref{eq:SG} of the form
\begin{equation}\label{eq:steadysol}
    \overline P(x):= \frac 12 x\cdot Ax, \qquad \forall x\in\R^3.
\end{equation}
Indeed, one can show that $\nabla \overline P$ is a steady solution of \eqref{eq:SG} with corresponding velocity field $\overline{u}$ given by
\begin{equation}\label{eq:steadyvel}
    \overline u(x):=Sx,
\end{equation}
where the matrix $S$ is given by $S:=A^{-1}J(A-I)$ and $I\in\R^{3\times3}$ is the identity matrix.

\subsection{Cullen's Stability Principle}

The choice of steady solutions \eqref{eq:steadysol} allows us to derive the linearised semi-geostrophic equation by formally taking Fr\'echet derivatives of \eqref{eq:SG} and \eqref{eq:divcurl} at $\nabla\overline P$ in the direction of a smooth conservative vector field $\nabla \psi$ (see Section \ref{sec:LSG_deriv}). Using the fact that the strictly-convex $\bar P$ has constant Hessian, we will show that LSG can be written as the abstract Cauchy problem
\begin{equation}\label{eq:LSG_abs}
    \left\{\begin{array}{l}
        \frac{d}{d t}\phi = \mathscr L\phi,\vspace{2mm}\\
        \phi(0)=\phi_0,
    \end{array}\right.
    \end{equation}
where the linear operator $\mathscr L$ is given by
\begin{equation}\label{eq:LSG_L}
    \mathscr L \phi:= -(\overline u \cdot \nabla)\phi - S^T\phi + \mathscr F^{-1}\circ\mathscr M_m\circ \mathscr F[\phi],
\end{equation}
as in \eqref{eq:LSG}.
In the above, the operator $\mathscr M_m$ in \eqref{eq:LSG_abs} is the multiplication operator $\mathscr M_m[\phi]=m\phi$ with symbol $m:\R^3\setminus\{0\}\to\R$ which depends on the choice of the matrix $A$ and is given by
\begin{equation}\label{eq:def_m}
 m(x):= 2\frac{x\cdot A^{-1}Jx}{x\cdot A^{-1}x},
\end{equation}
for any $x\in\R^3\setminus\{0\}$. Observe that $m\in L^{\infty}(\R^3)\cap \mathcal C^{\infty}(\R^3\setminus\{0\})$.

The primary reason we work with this specific family of steady solutions is due to the well-known Stability Principle of Cullen, which is adopted by most authors when analysing SG. Indeed, Cullen's stability principle (introduced for the first time in \cite{CS87}) postulates that physically stable solutions of SG are those for which $P(\cdot,t)$ is a convex function in its spatial variable for any $t$. For more about the stability principle, we invite the reader to see \cite[Ch.~3]{Cullen:book} and \cite{CKPW19}. 

As an interesting consequence of our analysis in this paper, we show that whilst steady solutions of the shape \eqref{eq:steadysol} are stable in the sense of Cullen, they are in fact \emph{dynamically unstable} in many natural topologies. An additional structural benefit to considering steady states of the form \eqref{eq:steadysol}, we show in Section \ref{sec:LSG_deriv}, is that all those steady solutions with constant Hessian $D^2\bar P$ result in a linearised equation that involves a pseudo-differential operator of order 0 on $\mathbb{R}^{3}$. We do not investigate the linearisation of SG around other steady solutions in this paper, and we leave it for future work. Let us also mention that the corresponding problem in bounded domains $\Omega\subset\mathbb{R}^{3}$ is substantially harder, with the existence of non-trivial steady solutions of \eqref{eq:SG} on bounded domains being an open problem.

\subsection{Characterisation of Steady Quadratic Flows}

We observed that the velocity field induced by a geopotential of the form \eqref{eq:steadysol} is given by the linear transformation in \eqref{eq:steadyvel}. In particular, different choices for the positive-definite symmetric matrix $A$ will produce different steady Eulerian flow fields. To show this, we consider a general symmetric matrix 
\begin{equation}\label{eq:LSG_def_A}
 A:=\begin{pmatrix}
     a&b&c\\b&d&e\\c&e&f
    \end{pmatrix},
\end{equation}
with the coefficients $a,b,c,d,e,f\in\R$ chosen in such a way that $A$ is positive definite. Then, the flow velocity is given by $\overline u(x)=Sx$, with $S=A^{-1}J(A-I)$. In particular, $S$ can be written in terms of the coefficients of $A$ 
and it is easily verified that its spectrum is $\sigma(S)=\{0,\lambda_A,-\lambda_A\}$, where 
\begin{equation}\label{eq:def_mu}
 \lambda_A:= \sqrt{\frac{\mu_A}{\det A}}, \qquad \mu_A:=af-c^2+df-e^2-f -\underbrace{(adf-ae^2-b^2f+2bce-c^2d)}_{=\det A}.
\end{equation}
Hence, the matrix $S$ has real eigenvalues if and only if $\mu_A\geq0$, and it has two purely imaginary eigenvalues otherwise. We define the two sets in the space of positive definite symmetric real matrices
\begin{align}\label{eq:flowsets}
\begin{split}
 \mathscr A_+:= \left\{ A\in\text{Sym}_+(3,\R)\, :\, \mu_A>0 \right\},\\
 \mathscr A_-:= \left\{ A\in\text{Sym}_+(3,\R)\, :\, \mu_A<0 \right\}.
 \end{split}
\end{align}
As a result, the set $\mathscr A_+$ corresponds to \emph{hyperbolic flows}, whereas the set $\mathscr A_-$ corresponds to \emph{elliptic flows}. The set $\mathscr A_0:=\{A\in\text{Sym}_+(3,\R)\, :\, \mu_A=0 \}$ contains those matrices for which $\bar u\equiv 0$, and moreover this case is not mathematically interesting as the corresponding Fourier multiplier $m$ is null. It is readily seen that the set $\mathscr A_0$ is non-empty as it contains the identity matrix $I$.

\subsection{Statement of Main Results}

For reasons pertaining to plane wave perturbations that we outline in the sequel, we aim to construct solutions of \eqref{eq:LSG} in $\mathcal S'(\R^3,\R^3)$ by duality, thereby considering the \emph{adjoint} LSG given by the abstract Cauchy problem
\begin{equation}\label{eq:ALSG}
    \left\{ \begin{array}{l}
                \frac{d}{dt} \psi = \mathscr K\psi,\vspace{2mm}\\
                \psi(0) = \psi_0,
            \end{array}\right.
\end{equation}
where the operator $\mathscr K$ is the adjoint of $\mathscr L$, namely
\begin{equation}\label{eq:LSG_K}
    \mathscr K\psi:= (\overline u \cdot \nabla)\psi - S\psi + \mathscr F\circ\mathscr M_m\circ \mathscr F^{-1}[\psi].
\end{equation}
The reader will observe that symbol $m$ defined in \eqref{eq:def_m} is even, therefore a change of variable shows that $\mathscr F\circ\mathscr M_m\circ \mathscr F^{-1}= \mathscr F^{-1}\circ\mathscr M_m\circ \mathscr F$. This means that the analysis of the operator $\mathscr K$ in \eqref{eq:LSG_K} is essentially the same as that of the operator $\mathscr L$ defined in \eqref{eq:LSG_L}, up to a change of sign in the first term and transposing the matrix $S$ in the second. In particular, all of the results that we prove for $\mathscr L$ hold for $\mathscr K$ mutatis mutandis. 

\begin{rmk}
Since the velocity $\overline u$ corresponding the steady solution \eqref{eq:steadysol} is given by $\overline u(x)=Sx$, the associated flow map $F_{\overline u}(x,t)= e^{tS} x$ is a volume-preserving diffeomorphism of $\R^3$ for any time $t$, and we expect LSG in Eulerian coordinates to be equivalent to the formulation in Lagrangian coordinates. We will not comment further on the Lagrangian formulation, except for mentioning that existence of strong solutions  of LSG in Lagrangian coordinates can be proved in $L^2(\R^3,\R^3)$ by applying the theory of \emph{evolution systems} generated by time-dependent operators, as presented in \cite[Ch.~5]{Pazy12}. In fact, LSG in Lagrangian coordinates (up to a rescaling) is given by 
\begin{equation}\label{eq:LSG_Lagrangian}
    \left\{ \begin{array}{l}
                \frac{d}{dt} \Phi (t) = \mathcal A(t)\Phi(t) 
                ,\vspace{2mm}\\
                \Phi (0) = \Phi_0,
            \end{array}
    \right.
\end{equation}
where the time-depending operator $\mathcal A(t)$ is defined as 
\begin{equation*}
    \mathcal A(t):= \mathscr F^{-1}\circ \mathscr M_{\tilde m(t)}\circ \mathscr F,  
 \end{equation*}
 with $\tilde m$ defined in \eqref{eq:LSG_def_symbols}.
\end{rmk}
As we mentioned above, our purpose is firstly to present the $L^2$ existence theory of LSG and extend it to tempered distributions. The following is our first main result:

\begin{thm}\label{thm:LSG_existence_S'}
     Given an initial tempered distribution $\eta_0\in\mathcal S'(\R^3,\R^3)$, there exists a strong solution in $\mathcal S'(\R^3,\R^3)$ of \eqref{eq:LSG_abs}, in the sense of Definition \ref{de:LSG_solutions} (iv).
\end{thm}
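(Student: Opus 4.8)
The plan is to construct the solution by duality against the adjoint problem \eqref{eq:ALSG}, as the emphasis on $\mathscr K$ in \eqref{eq:LSG_K} suggests: having the $L^2$ theory for the generator $\mathscr K$ in hand (the results proved for $\mathscr L$ hold mutatis mutandis), one transports the initial datum by testing it against the adjoint evolution. Writing $\{W(t)\}_{t\ge 0}$ for the solution operator of \eqref{eq:ALSG} on $L^2(\R^3,\R^3)$ furnished by that theory, I would define the candidate $\eta(t)\in\mathcal S'(\R^3,\R^3)$ through the pairing
\begin{equation*}
 \langle \eta(t),\varphi\rangle := \langle \eta_0, W(t)\varphi\rangle, \qquad \varphi\in\mathcal S(\R^3,\R^3),
\end{equation*}
and then verify that $\eta$ is a strong $\mathcal S'$ solution in the sense of Definition \ref{de:LSG_solutions} (iv). Since $\mathscr K$ is the $L^2$ adjoint of $\mathscr L$, this is the only consistent candidate, and it satisfies $\eta(0)=\eta_0$ because $W(0)=\mathrm{id}$.

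The key steps are as follows. First I would record the explicit form of $W(t)$ coming from the representation formula of the $L^2$ theory: passing to Fourier variables turns \eqref{eq:ALSG} into a transport equation along the linear characteristics $\xi\mapsto e^{-\tau S^{T}}\xi$ generated by $\overline u(x)=Sx$ from \eqref{eq:steadyvel}, whose integration yields
\begin{equation*}
 \widehat{W(t)\varphi}(\xi)= \exp\!\Big(\int_0^t m\big(e^{-\tau S^{T}}\xi\big)\,d\tau\Big)\, e^{-tS}\,\widehat{\varphi}\big(e^{-tS^{T}}\xi\big),
\end{equation*}
with $m$ as in \eqref{eq:def_m}. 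Because $m\in L^\infty$, the scalar amplitude is bounded between $e^{-t\|m\|_{L^\infty}}$ and $e^{t\|m\|_{L^\infty}}$, and it is smooth wherever $\xi\neq 0$. Second, I would establish the mapping and time-regularity of $W(t)$ on the test space, so that $\varphi\mapsto\langle\eta_0,W(t)\varphi\rangle$ is a well-defined tempered distribution depending in a $C^1$ manner on $t$. Third, I would differentiate the pairing in $t$, using $\tfrac{d}{dt}W(t)\varphi=\mathscr K W(t)\varphi=W(t)\mathscr K\varphi$, to obtain $\tfrac{d}{dt}\langle\eta(t),\varphi\rangle=\langle\eta_0,W(t)\mathscr K\varphi\rangle=\langle\eta(t),\mathscr K\varphi\rangle=\langle\mathscr L\eta(t),\varphi\rangle$, which is precisely $\tfrac{d}{dt}\eta=\mathscr L\eta$ in $\mathcal S'$ as required by \eqref{eq:LSG_abs}.

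The hard part is the mapping property in the second step, and it stems entirely from the order-$0$ operator. Its symbol $m$ is bounded and smooth on $\R^3\setminus\{0\}$ but, being homogeneous of degree $0$ with nonzero numerator $\xi\cdot A^{-1}J\xi=\tfrac12\,\xi\cdot(A^{-1}J-JA^{-1})\xi$ for $A\notin\mathscr A_0$, it is genuinely discontinuous at $\xi=0$. The amplitude above inherits this single-point discontinuity, so $W(t)$ does \emph{not} preserve the full Schwartz class: $\widehat{W(t)\varphi}$ is rapidly decreasing and $C^\infty$ away from the origin but merely bounded there, whence $W(t)\varphi$ is smooth yet decays only polynomially (like $|x|^{-3}$), and likewise $\mathscr K\varphi\notin\mathcal S$. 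Thus the pairing $\langle\eta_0,W(t)\varphi\rangle$ is delicate for general $\eta_0$, since already testing against a nonzero constant probes the frequency origin where the symbol is undefined. I would resolve this by treating the zeroth-order operator through duality on the subclass of test functions whose Fourier transform vanishes to infinite order at $\xi=0$ (a Lizorkin-type class), on which $W(t)$ acts continuously and preserves Schwartz decay, and then upgrading to all of $\mathcal S'$ via the uniform bound $\|m\|_{L^\infty}<\infty$ together with a regularisation of $m$ near the origin and a limiting argument in the weak-$*$ topology. Controlling this limit, and checking that the resulting object is independent of the regularisation and satisfies \eqref{eq:LSG_abs} in the sense of Definition \ref{de:LSG_solutions} (iv), is where the real work lies; by contrast the transport and the bounded matrix term $-S$ are harmless, since $x\mapsto e^{-tS}x$ is a volume-preserving linear diffeomorphism and the flow of $\overline u$ is explicit.
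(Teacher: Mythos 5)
Your construction is the same as the paper's: the paper defines $\eta(t):=F(t)'\eta_0$ by duality, $\ip{F(t)'\eta_0}{\psi}=\ip{\eta_0}{F(t)\psi}$, where $F(t)$ is the solution operator of the adjoint problem \eqref{eq:ALSG} on $\mathcal S$ from Corollary \ref{cor:ALSG_existence_S} (your $W(t)$ --- your Fourier-side formula agrees with \eqref{eq:ALSG_rep_form} after the substitution $\zeta=e^{tS^{T}}\xi$), and then differentiates the pairing using the commutation $\frac{d}{dt}F(t)\psi=\mathscr K F(t)\psi=F(t)\mathscr K\psi$ supplied by Lemma \ref{lem:LSG_commutativity}. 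Where you part ways is the mapping property of the adjoint flow on test functions: the paper takes $F(t):\mathcal S\to\mathcal S$, together with continuity of $t\mapsto F(t)\psi$ in the Schwartz seminorms, as already established in Theorem \ref{thm:LSG_existence_S} (via the bound \eqref{eq:bound_T3}), after which the proof of Theorem \ref{thm:LSG_existence_S'} is three lines. You instead contend that $T_3(t)$ does \emph{not} preserve $\mathcal S$ and reroute through a Lizorkin-type class plus a regularisation of the symbol.

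Your objection is substantive and, for the interesting case $A\in\mathscr G$, correct: $\tilde M(t;\cdot)$ is homogeneous of degree zero and non-constant, so $\tilde M(t;\cdot)\hat\varphi$ is discontinuous at $\xi=0$ whenever $\hat\varphi(0)\neq0$, and $T_3(t)\varphi$ cannot then be Schwartz (its Fourier transform would otherwise be continuous); generically it decays only like $|x|^{-3}$, exactly as you say. So you have put your finger on a step the paper asserts but which deserves more care. However, your repair is only a sketch and carries its own obstruction that you do not resolve: the dual of the Lizorkin class is $\mathcal S'$ modulo polynomials, so the pairing built there does not by itself determine an element of $\mathcal S'$, and the ``upgrade'' by mollifying $m$ near the origin and passing to a weak-$*$ limit is precisely the part you defer --- you would still need existence of the limit, its independence of the mollification, and the $\mathcal C^1$-in-$t$ property required by Definition \ref{de:LSG_solutions} (iv). As written, the proposal reproduces the paper's architecture and raises a legitimate concern about its key lemma, but it does not close the argument.
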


The reason we extend the existence theory of LSG to allow for less regular solutions than those which take values in $L^2$ is so that our solution theory admits plane-wave solutions of the form
\begin{equation}\label{eq:LSG_planewavesol}
    \phi(x,t) = \nabla f(x,t), \qquad f(x,t)=a(t)e^{2\pi ik(t)\cdot x},
\end{equation}
where $a:\R\to\R$ and $k:\R\to\R^3$ are appropriately chosen functions. These solutions are not integrable on $\mathbb{R}^{3}$, with $\phi(\cdot,t)\notin L^p(\R^3,\R^3)$ for any $p\in [1,\infty)$. Nevertheless, we consider the corresponding regular distribution $\eta(t)=\eta_{\phi(\cdot,t)}$ and characterise the functions $a$ and $k$ for which $\eta$ is a strong solution of LSG in $\mathcal S'$. Then, for these specific solutions, we look at their stability in the following sense.

\begin{de}\label{de:LSG_stability}
    We say that a steady solution $\bar P$ of \eqref{eq:SG} is \emph{stable to plane-wave perturbations} if any plane-wave solution $\phi$ of the form \eqref{eq:LSG_planewavesol} of the associated LSG belongs to $L^{\infty}(\R^3\times\R,\R^3)$. Otherwise, we say that $\bar P$ is \emph{unstable to plane-wave perturbations}, i.e. there exists a plane-wave solution $\phi$ of LSG of the form \eqref{eq:LSG_planewavesol} such that $\|\phi(\cdot,t)\|_{L^{\infty}(\R^3,\R^3)}$ is unbounded.
\end{de}
With this in place, let us state the main stability result of this work.
\begin{thm}\label{thm:LSG_stability}
    The steady solution $\bar P$ defined in \eqref{eq:steadysol} associated to a matrix $A$ is stable to plane-wave perturbations if it corresponds to an elliptic flow, i.e. $A\in\mathscr A_-$ as in \eqref{eq:flowsets}, and it is unstable to plane-wave perturbations if it corresponds to an hyperbolic flow, i.e. $A\in\mathscr A_+$.
\end{thm}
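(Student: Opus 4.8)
The plan is to reduce the whole stability question to the asymptotics of a single scalar, namely $|k(t)|$. First I would substitute the plane-wave ansatz $\phi(x,t)=\nabla f(x,t)=2\pi i\,a(t)k(t)e^{2\pi i k(t)\cdot x}$ into the abstract equation $\frac{d}{dt}\phi=\mathscr L\phi$ with $\mathscr L$ as in \eqref{eq:LSG_L}. A single plane wave is an eigenfunction of the order-zero operator $\mathscr F^{-1}\circ\mathscr M_m\circ\mathscr F$, which therefore acts by multiplication by its symbol, $\mathscr F^{-1}\circ\mathscr M_m\circ\mathscr F[\phi]=m(k(t))\phi$. Collecting the terms that are affine in $x$ — these arise from $\partial_t$ hitting the phase and from the transport term $(\overline u\cdot\nabla)\phi$ with $\overline u=Sx$ — and requiring their cancellation (since $\phi$ must remain a pure plane wave) forces $\dot k=-S^T k$; the surviving $x$-independent terms then give $\dot a=m(k)a$. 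Hence $k(t)=e^{-tS^T}k(0)$ and $a(t)=a(0)\exp\bigl(\int_0^t m(k(s))\,ds\bigr)$.

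The crux is to control $|a(t)||k(t)|$, because $\|\phi(\cdot,t)\|_{L^\infty(\R^3,\R^3)}=2\pi|a(t)||k(t)|$ while $\int_0^t m(k(s))\,ds$ is a priori only known to grow linearly. The key observation is that the weighted quantity $q(t):=k(t)\cdot A^{-1}k(t)$ carries exactly the cancelling dynamics. Differentiating, using $\dot k=-S^T k$ and the symmetry of $A^{-1}$, and writing $SA^{-1}=A^{-1}J-A^{-1}JA^{-1}$, the antisymmetry of $J$ (so that $(A^{-1}k)\cdot J(A^{-1}k)=0$) collapses the computation to
\[
\dot q=-2\,k\cdot SA^{-1}k=-2\,k\cdot A^{-1}Jk=-m(k)\,q .
\]
Combined with $\dot a=m(k)a$ this yields the conservation law $a(t)q(t)\equiv a(0)q(0)$. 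Since $A^{-1}$ is positive definite we have $q(t)\asymp|k(t)|^2$, and therefore $\|\phi(\cdot,t)\|_{L^\infty}=2\pi|a(t)||k(t)|\asymp|k(t)|^{-1}$, with constants depending only on $A$ and the initial data. Consequently $\overline P$ is stable to plane-wave perturbations if and only if $\inf_{t\in\R}|e^{-tS^T}k(0)|>0$ for every $k(0)\neq0$, and unstable as soon as some trajectory satisfies $\inf_t|k(t)|=0$.

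It then remains to read off the behaviour of $|e^{-tS^T}k(0)|$ from $\sigma(S^T)=\sigma(S)=\{0,\lambda_A,-\lambda_A\}$, noting $\lambda_A\neq0$ in both regimes. In the hyperbolic case $A\in\mathscr A_+$ the eigenvalue $\lambda_A$ is real and positive, so I would take $k(0)$ to be a real eigenvector of $S^T$ for $\lambda_A$; then $k(t)=e^{-t\lambda_A}k(0)\to0$ as $t\to+\infty$, whence $\|\phi(\cdot,t)\|_{L^\infty}\to\infty$ and $\overline P$ is unstable. In the elliptic case $A\in\mathscr A_-$ the three eigenvalues $0,\pm i\beta$ (with $\beta=|\lambda_A|>0$) are distinct and purely imaginary, so $S$ is diagonalisable over $\C$ and $\sup_{t\in\R}\|e^{\pm tS^T}\|=:M<\infty$. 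Then $|k(0)|=|e^{tS^T}k(t)|\le M|k(t)|$ gives $\inf_t|k(t)|\ge|k(0)|/M>0$, so every plane-wave solution is bounded and $\overline P$ is stable.

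The main obstacle is the second step: the naive estimate $|a(t)|\le|a(0)|e^{\|m\|_\infty|t|}$ is far too lossy to decide stability, and everything hinges on identifying $q=k\cdot A^{-1}k$ as the correct weight whose decay rate $-m(k)$ exactly matches the growth of $a$. This identification is not arbitrary — it is the algebraic shadow of the antisymmetry of $J$ together with the structure $S=A^{-1}J(A-I)$ — and once it is in place the remaining spectral dichotomy is routine linear algebra. A minor point to handle is that $k(t)$ and $a(t)$ stay real, which is automatic since $S$ is a real matrix and, in the hyperbolic case, $\lambda_A$ admits a real eigenvector.
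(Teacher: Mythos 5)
Your proof is correct, and it takes a genuinely different route from the paper's. The paper argues the two regimes by separate computations: for hyperbolic flows it evaluates $\tilde m(t;k_0^{\pm})=\pm 2\ld$ along real eigenvectors of $S^T$ and reads off exponential growth of $|a(t)||k(t)|$; for elliptic flows it uses the $\tau$-periodicity of $k(t)$ to reduce boundedness to the mean-zero identity $\int_0^{\tau}m(k(r))\,dr=0$, which it then verifies by expanding $k(t)\cdot A^{-1}Jk(t)$ as a trigonometric polynomial (and in fact only asserts, without detail, that the decisive coefficient combination $c_1+2c_5$ vanishes). Your conserved quantity $a(t)\,q(t)$ with $q=k\cdot A^{-1}k$ replaces both computations at once: the identity $\dot q=-2k\cdot SA^{-1}k=-2k\cdot A^{-1}Jk=-m(k)\,q$ checks out (the cross term $k\cdot A^{-1}JA^{-1}k$ vanishes by antisymmetry of $J$, exactly as you say), it gives $\int_0^{t}m(k(r))\,dr=\log\bigl(q(0)/q(t)\bigr)$ and hence the paper's mean-zero claim as an immediate corollary, and it reduces the whole dichotomy to whether $|e^{-tS^T}k_0|$ stays bounded away from $0$ and $\infty$, which is settled by the spectral trichotomy of $S$. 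Your argument also absorbs the degenerate set $\mathscr B$ (where $m\equiv 0$) with no case split: there $q$ is constant, so $|k(t)|$ is automatically bounded above and below for every $k_0\neq 0$, which forces $S$ to have no eigenvalue of nonzero real part --- so the paper's separately treated subcase $A\in\mathscr A_+\cap\mathscr B$ is in fact vacuous (one can check directly that $\mathscr B$ consists of the matrices commuting with $J$, namely $\mathrm{diag}(a,a,f)$, all of which give $S$ proportional to $J$). What the paper's approach buys is the explicit rate $e^{\pm t\ld}$ along eigendirections; what yours buys is a structural first integral coming from the antisymmetry of $J$ and the form $S=A^{-1}J(A-I)$, a complete and uniform proof of the elliptic case, and the sharper statement $\|\phi(\cdot,t)\|_{L^{\infty}}\asymp |k(t)|^{-1}$ valid for every choice of $A$ and $k_0$.
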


\subsection{Notation and Main Definitions}

For the reader's convenience, we fix here the notation that will be used throughout. The space $\mathcal S(\R^3,\R^3)$ is the space of Schwartz functions, i.e. the set of all $f\in \mathcal C^{\infty}(\R^3,\R^3)$ such that 
\begin{equation}\label{eq:S_seminorms}
 \|f\|_{\al,\be}:=\sup_{x\in\R^3}|x^{\al}D^{\be}f(x)|<\infty,
\end{equation}
for any $\al,\be\in\N^3$ multi-indices, where $x^{\al}:=x_1^{\al_1}x_2^{\al_2}x_3^{\al_3}$ and $D^{\be} = \frac{\partial^{|\be|}}{\partial x_1^{\be_1}\partial x_2^{\be_2}\partial x_3^{\be_3}}$. The set $\mathcal S(\R^3,\R^3)$ endowed with the weak topology generated by the family of seminorms $\{\|\cdot\|_{\al,\be}\}_{\al,\be\in\N^3}$ is a Fr\'echet space.  One can prove that the topology generated by the family of seminorms is equivalent to the topology generated by the family of their finite linear combinations.
The space $\mathcal S'(\R^3,\R^3)$ is the space of tempered distributions endowed with the weak*-topology.
We use the notation $\mathcal S$ and $\mathcal S'$ in place of $\mathcal S(\R^3,\R^3)$ and $\mathcal S'(\R^3,\R^3)$ respectively, unless it is necessary to specify a different range of the Schwartz functions.
$\mathscr F$ and $\mathscr F^{-1}$ denote the Fourier transform and its inverse respectively. To avoid confusion due to the notation, we specify that we define the Fourier transform and its inverse on $L^1(\R^3,\R^3)$ as follows:
\begin{equation}\label{eq:def_FT}
    \mathscr F[\phi](\xi):=\int_{\R^3}\phi(x)e^{-2\pi i\xi\cdot x}\,dx, \qquad 
    \mathscr F^{-1}[\phi](x):=\int_{\R^3}\phi(\xi)e^{2\pi i\xi\cdot x}\,d\xi,
\end{equation}
and extend this definition to $L^2(\R^3,\R^3)$ by density in the usual way.

In order to construct solutions to LSG, we will need to consider the following Fourier symbols, defined from $m$ in \eqref{eq:def_m}:
\begin{align}\label{eq:LSG_def_symbols}
    \tilde m(t;\xi):=m(e^{-tS^T}\xi),\qquad \tilde M(t;\xi):=e^{\int_0^t \tilde m(r;\xi)\,dr},\\
    \bar m(t;\xi):=m(e^{tS^T}\xi),\qquad \bar M(t;\xi):=e^{\int_0^t \bar m(r;\xi)\,dr}.
\end{align}
The symbol $\tilde m(t;\cdot)$ is even, therefore $\mathscr F^{-1}\circ \mathscr M_{\tilde m(t;\cdot)}\circ \mathscr F=\mathscr F\circ \mathscr M_{\tilde m(t;\cdot)}\circ \mathscr F^{-1}$, and the same holds for $\bar m(t;\cdot), \tilde M(t;\cdot)$ and $\bar M(t;\cdot)$.

We now introduce the notions of solutions that will be of our interest in what follows. 
\begin{de}\label{de:LSG_solutions}
    \begin{itemize}
        \item [(i)] We say that a function $\phi:\R\to L^2(\R^3,\R^3)$ is a \emph{strong solution} in $L^2(\R^3,\R^3)$ of LSG \eqref{eq:LSG_abs} if $\phi\in \mathcal C^1(\R,L^2(\R^3,\R^3))$ and it satisfies \eqref{eq:LSG_abs} in $L^2(\R^3,\R^3)$ for all $t\in\R$.
        
        \item [(ii)] We say that a function $\phi$ is a \emph{weak solution} in $L^2(\R^3,\R^3)$ of LSG \eqref{eq:LSG_abs} if $\phi\in \mathcal C^0 (\R;L^2(\R^3;\R^3))$, it satisfies the following integral equation
        \begin{align*}
            \int_{\R}\int_{\R^3} \phi(x,t)\,\cdot\, &\bigg[\partial_t f(x,t)+ (\overline u(x) \cdot \nabla)f(x,t) - Sf(x,t) + \mathscr F\circ \mathscr M_m \circ \mathscr F^{-1}[f(\cdot,t)](x)\bigg]\,dx\,dt=0
        \end{align*}
        for any test function $f\in\mathcal C^{\infty}_C(\R^3\times\R;\R^3)$, and $\phi(\cdot,0)=\phi_0$ in $L^2(\R^3,\R^3)$.

        \item [(iii)] We say that a function $\phi:\R\to\mathcal S$ is a \emph{strong solution} in $\mathcal S$ of LSG \eqref{eq:LSG_abs} if $\phi\in \mathcal C^1(\R,\mathcal S)$ and it satisfies the differential equation and the initial condition in \eqref{eq:LSG_abs} in $\mathcal S$ for all $t\in\R$.
        
        \item [(iv)] We say that $\eta:\R\to\mathcal S'$ is a \emph{strong solution} in $\mathcal S'$ of LSG \eqref{eq:LSG_abs} if $\eta\in \mathcal C^1(\R,\mathcal S')$ and it satisfies the differential equation and the initial condition in \eqref{eq:LSG} in $\mathcal S'$ for all $t\in\R$, i.e.
        \begin{align*}
            \ip{\frac d{dt} \eta(t)}{\psi} = \ip{\mathscr L \eta(t)}{\psi}\quad\text{and}\quad \ip{\eta(0)}{\psi}=\ip{\eta_0}{\psi},\qquad \psi\in\mathcal S,\forall t\in\R.
        \end{align*}
        The time derivative in the expression above is to be interpreted in the weak*-sense in $\mathcal S'$.
    \end{itemize}    
\end{de}

\noi
The operator $\mathscr L$ defined in \eqref{eq:LSG_L} consists of three additive terms: we will need to refer to them and the groups of operators that they generate, therefore we list here their definitions for the reader's convenience.

\begin{de}\label{de:LSG_semigroups}
    \begin{itemize}
    \item [(i)] We consider the three additive terms that constitute the operator $\mathscr L=L_1+L_2+L_3$ defined in \eqref{eq:LSG_L}:
    \begin{align*}
        &L_1[\phi]:=-(\bar u\cdot \nabla)\phi,\\
        &L_2[\phi]:=-S^T\phi,\\
        &L_3[\phi]:=\mathscr F^{-1}\circ\mathscr M_m\circ\mathscr F[\phi]=\mathscr F^{-1}[m\hat{\phi}].
    \end{align*}
    \item [(ii)] We define the families of operators $\{T_1(t)\}_{t\in\R}$, $\{T_2(t)\}_{t\in\R}$ and $\{T_3(t)\}_{t\in\R}$ as follows: 
        \begin{align*}
         &T_1(t)[\phi](x)=\phi(e^{-tS}x),\\
         &T_2(t)[\phi](x)=e^{-tS^T}\phi(x),\\
         &T_3(t)[\phi](x)=\mathscr F^{-1}\circ\mathscr M_{\tilde M(t)}\circ\mathscr F[\phi] = \mathscr F^{-1}\big[\tilde M(t;\cdot)\hat{\phi}\big].
        \end{align*}
    \item [(iii)] For any $t\in\R$, we define the operator $G(t)$ as $G(t)[\phi]=T_1(t)T_2(t)T_3(t)[\phi]$, i.e.
    \begin{equation}\label{eq:LSG_rep_form}
        G(t)[\phi](x) = \int_{\R^3} e^{-tS^T}\hat {\phi} (\xi) \tilde M(t;\xi) e^{2\pi i \xi\cdot e^{-tS}x}\,d\xi.
    \end{equation}
    \end{itemize}
  
  With an abuse of notation, we use the same symbol to denote an operator defined in different topologies. For instance, we denote by $\mathscr L$ both the operator in \eqref{eq:LSG_L} defined on the space of distributions $\mathcal S'$ and its ``restriction'' to functions in $L^2(\R^3,\R^3)$ and $\mathcal S$. The topology that we work with will be clear by the context. By this, we mean that if $\eta_f$ is a regular distribution corresponding to the function $f\in L^2(\R^3,\R^3)$, then $\mathscr L \eta_f = \eta_{\mathscr L f}$. For the sake of clarity, we mention that by \emph{regular distribution} we mean a distribution $\eta\in\mathcal S'$ such that there exists a function $f\in L^p(\R^3,\R^3)$ for some $p\in[1,\infty]$, such that 
    \begin{align*}
        \ip{\eta}{\psi} = \int_{\R^3} f(x)\cdot \psi(x) \,dx, \qquad \forall \psi\in\mathcal S.
    \end{align*}

\end{de}

Some concepts of the theory of semigroups will be employed throughout the paper: the reader can observe that $L_1$ is the infinitesimal generator of the strongly continuous group $\{T_1(t)\}_{t\in\R}$ in $L^2(\R^3,\R^3)$ The theory of semigroups on Banach spaces is classical (and can be found in \cite{Pazy12}), whilst the theory of semigroups on locally convex topological spaces can be found in \cite[Ch.~IX]{Yosida} and in \cite{AK:Trotter}.

\subsection{Structure of the Paper}
We present the proofs of the theorems above in the following sections, proceeding as follows:
in Section \ref{sec:LSG_deriv}, we present the formal derivation of LSG for a conservative vector field, starting from SG as written in \eqref{eq:SG}, and we comment on the degeneracy of the symbol $m$, distinguishing between the matrices $A$ that give rise to a trivial pseudo-differential operator and those that do not. 
In Section \ref{sec:LSG_existence}, we start by presenting the existence theory for solutions in $L^2(\R^3,\R^3)$, and then provide the existence result in the space of Schwartz functions $\mathcal S$. We conclude the section with the proof of Theorem \ref{thm:LSG_existence_S'} and discuss the regularity of solutions with regular initial datum, connecting the statement with the existence in $L^2(\R^3,\R^3)$.
In Section \ref{sec:LSG_stability}, we focus on plane-wave solutions of LSG and look at their long-term behaviour in the case of both elliptic and hyperbolic flows, proving Theorem \ref{thm:LSG_stability}. In Section \ref{sec:LSG_LQG}, we briefly investigate the stability of the same family of steady solutions in the quasi-geostrophic theory, and draw a comparison of the results in the two cases.

\section{Derivation of LSG}\label{sec:LSG_deriv}

We provide a formal derivation of LSG in Eulerian coordinates, as in \eqref{eq:LSG}, from SG in the form  presented in \eqref{eq:SG}. 
We observe that SG can be written as 
\begin{equation*}
 \frac{\dd}{\dd t}\nabla P = \mathscr N[\nabla P],
\end{equation*}
where the nonlinear operator $\mathscr N$ is defined formally as 
\begin{equation}\label{eq:def_N}
\mathscr N[\nabla P]:=-(D^2 P)^T\mathscr U[\nabla P] + J(\nabla P-\text{id}_{\R^3}), 
\end{equation}
so the linearised SG at $\nabla \overline P$ is obtained by formally taking the Fr\'echet derivative of the operator $\mathscr N$ at $\nabla\overline P$ and it is of the form
\begin{equation}\label{eq:linearisedACP}
 \frac{\dd}{\dd t}\nabla \psi = d\mathscr N[\nabla \overline P; \nabla \psi],
\end{equation}
in the unknown conservative perturbation $\nabla\psi\in\mathcal C^{\infty}(\R,\mathcal S)$.


\begin{pr}
    Given the smooth steady solution $\nabla \overline P$ of SG defined in \eqref{eq:steadysol}, the formal Fr\'echet derivative of the operator $\mathscr N$ defined in \eqref{eq:def_N} at $\nabla \overline P$ acting on a conservative vector field $\nabla\psi$ can be written as the operator $\mathscr L$ defined in \eqref{eq:LSG_L}.
\end{pr}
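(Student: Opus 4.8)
The plan is to compute the formal Fréchet derivative of $\mathscr N$ summand by summand, reduce the linearised div-curl system to a single scalar elliptic equation, and then read off its solution in Fourier variables. Throughout I work formally, ignoring questions of decay and of solvability of the div-curl system on $\R^3$, exactly as the statement (and the surrounding discussion) permits.

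First I would differentiate the two terms of $\mathscr N$ at $\nabla\overline P$ in the direction $\nabla\psi$. The term $J(\nabla P-\mathrm{id})$ is affine, so its derivative is just $J\nabla\psi$. For the quadratic term $-(D^2P)^T\mathscr U[\nabla P]$ I apply the product rule, using that $D^2P$ is linear in $P$ and that $D^2\overline P=A$ is constant and symmetric; writing $v:=d\mathscr U[\nabla\overline P;\nabla\psi]$ and $\mathscr U[\nabla\overline P]=\overline u=Sx$, this gives the contribution $-(D^2\psi)\overline u-Av$. I also record the elementary identity $(D^2\psi)\overline u=(\overline u\cdot\nabla)\nabla\psi$, so the first piece is precisely the advective term in $\mathscr L$. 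To handle $v$ I linearise the div-curl system: substituting $\nabla P=\nabla\overline P+\eps\nabla\psi$, $u=\overline u+\eps v$ and collecting first-order terms yields $\nabla\wedge(Av)=\nabla\wedge\big(J\nabla\psi-(D^2\psi)\overline u\big)$ together with $\nabla\cdot v=0$. Since a curl-free field is formally a gradient, $Av=g+\nabla\chi$ with $g:=J\nabla\psi-(D^2\psi)\overline u$ and a scalar potential $\chi$ fixed by the solenoidal constraint, i.e. $\nabla\cdot(A^{-1}\nabla\chi)=-\nabla\cdot(A^{-1}g)$.

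The payoff is a clean cancellation: inserting $Av=g+\nabla\chi$ into $d\mathscr N=-(D^2\psi)\overline u-Av+J\nabla\psi$ and using $-(D^2\psi)\overline u-g+J\nabla\psi=0$ leaves the strikingly simple expression $d\mathscr N[\nabla\overline P;\nabla\psi]=-\nabla\chi$. Everything therefore reduces to solving the elliptic equation for $\chi$ and showing $-\nabla\chi=\mathscr L[\nabla\psi]$. The main obstacle is that the source $-\nabla\cdot(A^{-1}g)$ contains $(D^2\psi)\overline u=(D^2\psi)Sx$, whose linear-in-$x$ coefficients block a direct Fourier treatment. I would circumvent this with the identity $(D^2\psi)Sx=\nabla(\nabla\psi\cdot\overline u)-S^T\nabla\psi$, splitting $\chi=\chi_a+\chi_b$ with $\chi_a:=\nabla\psi\cdot\overline u$. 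The variable-coefficient part of the source is then absorbed exactly into $\nabla\cdot(A^{-1}\nabla\chi_a)$, leaving the constant-coefficient equation $\nabla\cdot(A^{-1}\nabla\chi_b)=-\nabla\cdot\big(A^{-1}(J+S^T)\nabla\psi\big)$.

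Passing to Fourier variables gives $\widehat{\chi_b}=-\dfrac{\xi\cdot A^{-1}(J+S^T)\xi}{\xi\cdot A^{-1}\xi}\,\hat\psi$, and the last ingredient is the algebraic identity $\xi\cdot A^{-1}S^T\xi=\xi\cdot A^{-1}J\xi$. This follows from $S=A^{-1}J(A-I)$, which gives $A^{-1}S^T=-JA^{-1}+A^{-1}JA^{-1}$, combined with the antisymmetry of $J$: the quadratic form $\xi\cdot A^{-1}JA^{-1}\xi=(A^{-1}\xi)\cdot J(A^{-1}\xi)=0$ vanishes, while $\xi\cdot JA^{-1}\xi=-\xi\cdot A^{-1}J\xi$ by transposition. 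Hence the symbol collapses to $-m(\xi)$ with $m$ as in \eqref{eq:def_m}, so $\chi_b=-\mathscr F^{-1}[m\hat\psi]$. Assembling the pieces,
\[
 -\nabla\chi=-\nabla\chi_a-\nabla\chi_b=-(\overline u\cdot\nabla)\nabla\psi-S^T\nabla\psi+\mathscr F^{-1}\circ\mathscr M_m\circ\mathscr F[\nabla\psi]=\mathscr L[\nabla\psi],
\]
where I used $-\nabla\chi_a=-(\overline u\cdot\nabla)\nabla\psi-S^T\nabla\psi$ from the gradient identity and $\nabla\mathscr F^{-1}[m\hat\psi]=\mathscr F^{-1}[m\,\widehat{\nabla\psi}]$. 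I expect the bookkeeping of the variable-coefficient elliptic term to be the only delicate point; the remainder is differentiation and constant-coefficient Fourier algebra.
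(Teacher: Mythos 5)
Your argument is correct and follows essentially the same route as the paper: differentiate $\mathscr N$ at $\nabla\overline P$, observe that the result is a gradient, reduce the linearised div-curl system to a constant-coefficient elliptic equation for the scalar potential, and solve it in Fourier variables, finishing with the same algebraic identity $\xi\cdot A^{-1}(J+S^T)\xi=2\,\xi\cdot A^{-1}J\xi$. The only cosmetic difference is that you remove the variable-coefficient part of the source in physical space via $(D^2\psi)Sx=\nabla(\nabla\psi\cdot\overline u)-S^T\nabla\psi$ before transforming, whereas the paper Fourier-transforms $-\nabla\cdot(A^{-1}D^2\psi\,Sx)$ directly and recovers the same advective and $S^T\nabla\psi$ contributions on the Fourier side.
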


\begin{proof}
    Formally, the Fr\'echet derivative of the operator $\mathscr N$ at $\nabla \overline P$ in the direction $\nabla \psi$ is given by 
    \begin{equation}\label{eq:def_dN}
        d\mathscr N[\nabla \overline P;\nabla \psi] = - A\ d\mathscr U[\nabla\overline P;\nabla \psi] - D^2\psi\, \overline u + J\nabla \psi,
    \end{equation}
    where the linear operator $d\mathscr U[\nabla\overline P;\cdot]$ is the Fr\'echet derivative of the nonlinear operator $\mathscr U$ at the steady solution $\nabla \overline P$ in the direction $\nabla\psi$. One can show that $v:=d\mathscr U[\nabla\overline P;\nabla\psi]$ coincides with a solution of the div-curl system
    \begin{equation}\label{eq:d_divcurl}
        \left\{ 
        \begin{array}{l}
            \nabla\wedge (A\ v) = - \nabla \wedge( D^2\psi\, \overline u) + \nabla\wedge J\nabla \psi,\vspace{2mm}\\
            \nabla\cdot v=0.
        \end{array}
        \right.
    \end{equation}
We consider $d\mathscr N[\nabla\overline{P};\nabla \psi]$, defined in \eqref{eq:def_dN}, and observe in \eqref{eq:d_divcurl} that it is a curl-free quantity, therefore there exists a scalar function $E$ such that 
    \begin{equation*}
        \nabla E = -A\ d\mathscr U[\nabla\overline P;\nabla \psi] - D^2\psi\, \overline u + J\nabla \psi.
    \end{equation*}
    As the matrix $A$ is positive definite, and $d\mathscr U[\nabla\overline P;\nabla \psi]$ is divergence-free, we can write an elliptic equation for $E$ in divergence form
    \begin{equation*}
        \nabla\cdot (A^{-1}\nabla E) = F_1+F_2:={-\nabla\cdot (A^{-1} D^2\psi\,\overline u)} + {\nabla\cdot(A^{-1}J\nabla\psi)}
    \end{equation*}
    By formally taking Fourier transform, we have that 
    \begin{equation*}
        \hat E= \hat E_1+\hat E_2, \qquad \hat{E}_i(\xi):= -\frac1{4\pi^2\, \xi\cdot A^{-1}\xi} \hat {F_i} (\xi).
    \end{equation*}
    We start looking at $E_1$: by considering the Fourier transform of $F_1$
    \begin{align*}
        \hat{F_1}(\xi)
        &= -\int_{\R^3} \nabla\cdot \left(A^{-1}D^2\psi(x)Sx\right)e^{-2\pi i\xi\cdot x}\,dx\\
        &=2\pi i SA^{-1}\xi\cdot \hat{\nabla \psi}(\xi)\, +\, 4\pi^2  (\xi\cdot A^{-1}\xi)  \mathscr F[\overline u\cdot \nabla \psi](\xi),
    \end{align*}
    one can write the explicit formula for $E_1$:
    \begin{align}\label{eq:nablaE1}
        E_1(x)
        &= -\frac1{4\pi^2} \int_{\R^3} \frac{\hat{F_1}(\xi)}{\xi\cdot A^{-1}\xi}\, e^{2\pi i\xi\cdot x} \,d\xi\\\notag
        &=\frac 1{2\pi i} \int_{\R^3} \frac{SA^{-1}\xi\cdot \hat{\nabla\psi}(\xi)}{\xi\cdot A^{-1}\xi}\,e^{2\pi i\xi\cdot x}\,d\xi - \overline u(x)\cdot \nabla \psi(x)\\  \notag
        &=\int_{\R^3} \frac{\xi\cdot SA^{-1}\xi}{\xi\cdot A^{-1}\xi}\hat{\nabla\psi}(\xi)\,e^{2\pi i \xi\cdot x}\,d\xi - S^T\nabla\psi(x) - \left(Sx\cdot \nabla\right)\nabla\psi(x).
    \end{align}
    Similarly, to find $ E_2$, we first write
    \begin{align*}
        \hat{F_2}(\xi) 
        &= \int_{\R^3} \nabla\cdot \left(A^{-1}J\nabla\psi(x)\right)\,e^{-2\pi i \xi\cdot x}\,dx\\
        &=-2\pi i JA^{-1}\xi\cdot \hat{\nabla\psi}(\xi), 
    \end{align*}
    and compute the inverse Fourier transform to find the explicit formula for $E_2$
    \begin{align*}
        E_2(x)
        &= -\frac 1{2\pi i}\int_{\R^3} \frac{JA^{-1}\xi\cdot \hat{\nabla\psi}(\xi)}{\xi\cdot A^{-1}\xi}\,e^{2\pi i \xi\cdot x}\,d\xi\\
        &=-\int_{\R^3} \frac{\xi\cdot JA^{-1}\xi}{\xi\cdot A^{-1}\xi}\hat{\nabla\psi}(\xi)\,e^{2\pi i \xi\cdot x}\,d\xi.
    \end{align*}
    We have now a representation formula for $\nabla E$ by the expression above and \eqref{eq:nablaE1}:
    \begin{equation*}
    \nabla E(x) =  -\left(Sx\cdot \nabla\right)\nabla\psi(x) - S^T\nabla\psi(x)  + \int_{\R^3} \frac{\xi\cdot (S-J)A^{-1}\xi}{\xi\cdot A^{-1}\xi}\hat{\nabla\psi}(\xi)\,e^{2\pi i \xi\cdot x}\,d\xi.
    \end{equation*}
    To obtain \eqref{eq:LSG}, we just need to observe that
    \begin{align*}
    \xi\cdot (S-J)A^{-1}\xi 
    = 2(\xi\cdot A^{-1}J\xi).
    \end{align*}
\end{proof}

\begin{rmk}[Degeneracy of the pseudo-differential operator]
    The flow of LSG is interesting for the presence of the pseudo-differential operator $\mathscr F^{-1}\circ \mathscr M_m\circ \mathscr F$, so we show that the action of this operator is not trivial, i.e. we show that  the pseudo-differential operator does not vanish for all choices of the matrix $A$.
    In fact, $\mathscr F^{-1}\circ \mathscr M_{m}\circ \mathscr F[\nabla \psi]= 0$ for a non-trivial $\nabla\psi$ if and only if $m=0$ a.e. on  $\R^3$.
    Therefore, we have that the operator $\mathscr F^{-1}\circ \mathscr M_{m}\circ \mathscr F$ is the zero operator if the matrix $A$ belongs to the set of matrices
    \begin{equation}\label{eq:badA}
        \mathscr B:= \left\{ A\in\text{Sym}^+_3(\R) : x\cdot (A^{-1}J)x = 0\ \forall x\in\R^3 \right\}.
    \end{equation}
    We denote the complement of $\mathscr B$ by $\mathscr G:= \text{Sym}^+_3(\R) \setminus \mathscr B$.
    An example of a matrix in the set $\mathscr B$ is $A=\beta I$, that describes a non-trivial elliptic flow when $\beta\in (0,1)\cup(1,\infty)$: 
    \begin{equation*}
        \overline P_{\beta}(x)= \frac{\beta}2 |x|^2, \qquad \overline u_{\beta}(x) = \frac{\beta -1}{\beta}Jx.
    \end{equation*}
    In particular, we observe that $\mathscr A_0\subsetneq \mathscr B$, as any matrix $A\in\mathscr A_0$ admits a null steady full velocity $\bar u = 0$, which gives a trivial multiplier $m=0$. On the other hand, the matrix $A={\beta} I$ with $\beta\in(0,1)\cup(1,\infty)$ corresponds to a trivial multiplier $m$ but non-trivial steady full velocity $\bar u_{\beta}$. This implies that matrices in the set $\mathscr B$ can generate elliptic and/or hyperbolic flows.
    We will discuss the definitions and the difference between elliptic and hyperbolic flows in Section \ref{sec:LSG_stability}.
\end{rmk}


\section{Existence Theory for LSG}\label{sec:LSG_existence}

In this section, we present the existence results for LSG in $L^2(\R^3,\R^3)$, then in $\mathcal S$, and finally in $\mathcal S'$, where we define strong solutions by duality. In fact, as we mentioned before, the operator $\mathscr L$ defined in \eqref{eq:LSG_L} and the operator $\mathscr L'=\mathscr K$ defined in \eqref{eq:LSG_K} are the same up the a sign and the transposition of the matrix $S$, therefore any result for LSG \eqref{eq:LSG_abs} can be proved for the abstract Cauchy problem \eqref{eq:ALSG}.

Although the existence of solutions in $L^2(\R^3,\R^3)$ can be proved by using the theory of strongly continuous semigroups on Banach spaces, we give a direct proof using the explicit representation formula of the solution. Formally, given the initial datum $\phi_0$, the solution $\phi$ can be written as $\phi(\cdot,t)=G(t)\phi_0$, where the operator $G(t)$ is defined in \eqref{eq:LSG_rep_form}. We provide here a formal derivation of the formula.

First of all, we derive LSG in Lagrangian coordinates, which we introduced in \eqref{eq:LSG_Lagrangian}. Given $\phi$ a solution of LSG \eqref{eq:LSG_abs}, we define the function
\begin{align}\label{eq:LSG_Lag_Phi}
    \Phi(x,t):= e^{tS^T}\phi(e^{tS}x,t), \qquad (x,t)\in\R^3\times\R,
\end{align}
and observe that 
\begin{align*}
    \frac{\partial}{\partial t}\Phi(x,t) 
    &= \frac{\partial}{\partial t} \left(e^{tS^T}\phi(e^{tS}x,t)\right)\\
    &= S^T e^{tS^T}\phi(e^{tS}x,t) + e^{tS^T}\frac{\partial}{\partial t}\phi(e^{tS}x,t) + e^{tS^T} \left(\frac{\partial}{\partial t}e^{tS}x\cdot \nabla\right) \phi(e^{tS}x,t)\\
    &=e^{tS^T}\left[ \frac{\partial}{\partial t}\phi(e^{tS}x,t) + S^T \phi(e^{tS}x,t) + \big( Se^{tS}x\cdot \nabla \big) \phi(e^{tS}x,t)  \right]\\
    &=e^{tS^T} \left[\mathscr F^{-1}\circ \mathscr M_m \circ\mathscr F[\phi(\cdot,t)](e^{tS}x)\right]\\
    &=e^{tS^T} \int_{\R^3}\int_{\R^3}m(\xi)\phi(y,t)e^{-2\pi i y\cdot \xi}e^{2\pi i \xi\cdot e^{tS}x}\,dy\,d\xi\\
    &=\int_{\R^3}\int_{\R^3} m(\xi) \Phi(e^{-tS}y,t)e^{-2\pi i y\cdot \xi}e^{2\pi i \xi\cdot e^{tS}x}\,dy\,d\xi\\
    &=\int_{\R^3}\int_{\R^3} \tilde m(t;\xi) \Phi(y,t) e^{-2\pi i y\cdot \xi}e^{2\pi i \xi\cdot x}\,dy\,d\xi\\
    &=\mathscr F^{-1}\circ \mathscr M_{\tilde m(t;\cdot)}\circ \mathscr F[\Phi(\cdot,t)](x),
\end{align*}
which corresponds to the equation \eqref{eq:LSG_Lagrangian}. We can now formally take the Fourier transform to write explicitly the solution $\Phi$: we have that $\Phi$ solves LSG in Lagrangian coordinates \eqref{eq:LSG_Lagrangian} if and only if $\widehat{\Phi(\cdot,t)}$ solves the abstract Cauchy problem
\begin{equation*}
    \left\{\begin{array}{l}
    \frac{\partial}{\partial t} \hat{\Phi(\cdot,t)} = \tilde m(t)\hat{\Phi(\cdot,t)},\vspace{2mm}\\
    \hat{\Phi(\cdot,0)}= \hat{\phi_0},
    \end{array}
    \right.
\end{equation*}
If we multiply both the sides by $\tilde M(t;\cdot)^{-1}=e^{-\int_0^t \tilde m(r;\cdot)\,dr}$, we see that
\begin{align*}
    \frac{\partial}{\partial t} \left(\tilde M(t;\cdot)^{-1}\hat{\Phi(\cdot,t)}\right) = 0, 
\end{align*}
and therefore $\Phi(\cdot,t) = \mathscr F^{-1}[\tilde M(t;\cdot)\hat{\phi_0}]$ is the solution of LSG in Lagrangian coordinates \eqref{eq:LSG_Lagrangian}. By the definition of the function $\Phi$ in \eqref{eq:LSG_Lag_Phi}, we have that 
\begin{align*}
    \phi(x,t) &= e^{-tS^T}\Phi(e^{-tS}x,t)\\
    &= \int_{\R^3} \tilde M(t;\xi) e^{-tS^T}\hat{\phi_0}(\xi) e^{2\pi i \xi\cdot e^{-tS}x}\,d\xi\\
    &=G(t)\phi_0(x).
\end{align*}

The derivation above is formal, and we now prove that $G(t)\phi_0$ is indeed a solution of LSG in the appropriate topologies, starting with $L^2(\R^3,\R^3)$. 

\subsection{Existence of Solutions of LSG in \texorpdfstring{$L^2(\R^3,\R^3)$}{L2}}

\begin{thm}\label{thm:LSG_existence_L2}
    Given $\phi_0\in L^2(\R^3,\R^3)$, the function $\phi(\cdot,t):=G(t)\phi_0$, where the operator $G(t)$ is defined in \eqref{eq:LSG_rep_form}, is a weak solution in $L^{2}(\R^3,\R^3)$ of \eqref{eq:LSG_abs}. 
    Moreover, if $\phi_0$ belongs to the domain $D(\mathscr L)\subset L^2(\R^3,\R^3)$ of the operator $\mathscr L$, then $\phi$ is a strong solution of LSG in $L^{2}(\R^3,\R^3)$.
    Furthermore, if $\phi_0$ is conservative, namely $\phi_0=\nabla f_0\in L^2(\R^3,\R^3)$, then $G(t)\phi_0$ is conservative for all times $t\in\R$.
\end{thm}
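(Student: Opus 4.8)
The plan is to exploit the factorisation $G(t)=T_1(t)T_2(t)T_3(t)$ from Definition~\ref{de:LSG_semigroups} and treat each factor on $L^2(\R^3,\R^3)$. First I would check that each factor is bounded on $L^2$: the flow factor $T_1(t)$ is an isometry because $x\mapsto e^{-tS}x$ is volume-preserving (one has $\Tr S=0$, consistent with the Remark); the matrix factor $T_2(t)$ has norm at most $\|e^{-tS^T}\|$; and the Fourier multiplier $T_3(t)$ has norm at most $\|\tilde M(t;\cdot)\|_{L^\infty}\le e^{|t|\,\|m\|_{L^\infty}}$ by Plancherel, using $m\in L^\infty$. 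Strong continuity of $t\mapsto G(t)\phi_0$ in $L^2$ then follows from that of each factor, giving $G(\cdot)\phi_0\in\mathcal C^0(\R,L^2)$. I would also record that $\{G(t)\}$ is in fact a $C_0$-group: the identity $G(t+s)=G(t)G(s)$ reduces, after passing to the spatial Fourier transform, to the cocycle identity $\tilde M(t;e^{tS^T}\xi)\,\tilde M(s;e^{(t+s)S^T}\xi)=\tilde M(t+s;e^{(t+s)S^T}\xi)$, which is a one-line change of variables in the defining integral $\int_0^t m(e^{-rS^T}\cdot)\,dr$ of \eqref{eq:LSG_def_symbols}.

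For the strong-solution claim I would differentiate the representation formula \eqref{eq:LSG_rep_form} in $t$. Its integrand is a product of three $t$-dependent pieces, $e^{-tS^T}$, $\tilde M(t;\xi)$ and $e^{2\pi i\xi\cdot e^{-tS}x}$, so Leibniz produces three contributions. The $e^{-tS^T}$-derivative gives $-S^T\phi=L_2\phi$ immediately. The exponential-derivative gives a factor $-2\pi i(\xi\cdot Se^{-tS}x)$; since $S$ commutes with $e^{-tS}$ one has $\xi\cdot Se^{-tS}x=Sx\cdot e^{-tS^T}\xi$, and as $e^{-tS^T}\xi$ is exactly the gradient factor of the integrand this term is $-(Sx\cdot\nabla)\phi=-(\bar u\cdot\nabla)\phi=L_1\phi$. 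The $\tilde M$-derivative gives a factor $\tilde m(t;\xi)$; here I would compute the spatial Fourier transform $\hat\phi(\xi,t)=e^{-tS^T}\tilde M(t;e^{tS^T}\xi)\,\hat\phi_0(e^{tS^T}\xi)$ using $\mathscr F[T_1(t)g](\xi)=\hat g(e^{tS^T}\xi)$, and then the volume-preserving substitution $\eta=e^{-tS^T}\xi$ together with $m(e^{-tS^T}\xi)=\tilde m(t;\xi)$ to recognise the term as $\mathscr F^{-1}[m\,\hat\phi(\cdot,t)]=L_3\phi$. Summing, $\partial_t\phi=\mathscr L\phi$. The delicate point, which I expect to be the main obstacle, is that this must hold in the $L^2$ norm: the contributions $L_2$ and $L_3$ are bounded and cause no difficulty, but the transport term $L_1$ is unbounded, so the difference quotients of $G(t)\phi_0$ converge in $L^2$ precisely when $\phi_0$ lies in the generator domain. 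I would therefore identify $D(\mathscr L)=D(L_1)$ (since $L_2,L_3$ are bounded perturbations) and invoke the $C_0$-group structure from the first step to conclude that $G(t)$ preserves this domain and that $\phi\in\mathcal C^1(\R,L^2)$ solves \eqref{eq:LSG_abs} strongly whenever $\phi_0\in D(\mathscr L)$.

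The weak-solution claim for an arbitrary $\phi_0\in L^2$ then follows by approximation. For $\phi_0\in D(\mathscr L)$ the strong equation can be tested against $f\in\mathcal C^{\infty}_C(\R^3\times\R,\R^3)$ and integrated by parts in $x$ and $t$, transferring $L_1,L_2,L_3$ onto their formal adjoints and producing exactly the integral identity of Definition~\ref{de:LSG_solutions}(ii). Given a general $\phi_0\in L^2$, I would choose $\phi_0^n\in D(\mathscr L)$ with $\phi_0^n\to\phi_0$ in $L^2$ and pass to the limit in the weak formulation, using the uniform boundedness of $G(t)$ on compact time intervals (from the first step) to ensure $\phi^n=G(\cdot)\phi_0^n\to\phi$ in $\mathcal C^0$ on such intervals.

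Finally, the conservativity claim is read off directly from $\hat\phi(\xi,t)=e^{-tS^T}\tilde M(t;e^{tS^T}\xi)\,\hat\phi_0(e^{tS^T}\xi)$. A field in $L^2$ is conservative exactly when its Fourier transform is almost everywhere parallel to $\xi$, because $\widehat{\nabla f}(\xi)=2\pi i\,\xi\,\hat f(\xi)$. If $\phi_0=\nabla f_0$, so that $\hat\phi_0(\zeta)\parallel\zeta$ for a.e. $\zeta$, then $\hat\phi_0(e^{tS^T}\xi)\parallel e^{tS^T}\xi$; applying $e^{-tS^T}$ returns a vector parallel to $\xi$, while the scalar $\tilde M(t;e^{tS^T}\xi)$ does not change the direction. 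Hence $\hat\phi(\xi,t)\parallel\xi$ for a.e. $\xi$, and $G(t)\phi_0$ is conservative for every $t\in\R$.
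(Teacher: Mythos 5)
Your argument is correct, and its first half (boundedness and strong continuity of the factors $T_1$, $T_2$, $T_3$, hence of $G(t)$, followed by a Leibniz differentiation of the representation formula \eqref{eq:LSG_rep_form} that identifies the three resulting terms with $L_1$, $L_2$, $L_3$) coincides with what the paper does. You diverge from the paper in two places, both legitimately. For the weak-solution claim, the paper argues by duality: it moves $G(t)$ onto the test function via the adjoint $G(t)'$ of \eqref{eq:LSG_G'} and shows $G(t)'\left(\partial_t\psi+\mathscr K\psi\right)=\partial_t\left(G(t)'\psi\right)$, so the space--time integral vanishes directly for every $\phi_0\in L^2$; you instead first prove the strong equation on the dense domain $D(\mathscr L)$, integrate by parts there, and pass to the limit using the locally uniform bounds on $G(t)$. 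Your route needs the extra density step and the identification $D(\mathscr L)=D(L_1)$ together with the fact that the group preserves its generator's domain (which is standard once one knows $\mathscr L$ generates $\{G(t)\}$ --- a fact the paper itself notes follows from $\mathscr L$ being a bounded perturbation of the generator of $T_1$), but it has the advantage of not requiring the separate manipulation of the pseudo-differential operator $G(t)'$ that the paper leaves implicit. For conservativity, the paper exhibits the scalar potential explicitly, writing $\phi(x,t)=\nabla\int_{\R^3}\tilde M(t;\xi)\hat f_0(\xi)e^{2\pi i\xi\cdot e^{-tS}x}\,d\xi$, whereas you use the equivalent Fourier-side criterion that $\hat\phi(\xi,t)$ be parallel to $\xi$ a.e., read off from $\hat\phi(\xi,t)=e^{-tS^T}\tilde M(t;e^{tS^T}\xi)\hat\phi_0(e^{tS^T}\xi)$; both computations rest on the same cancellation $e^{-tS^T}\bigl(e^{tS^T}\xi\bigr)=\xi$ and on $\det e^{tS}=1$ (i.e.\ $\Tr S=0$), which you correctly record. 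Your additional observation that $\{G(t)\}_{t\in\R}$ satisfies the cocycle identity, verified by the change of variables in $\int_0^t m(e^{-rS^T}\cdot)\,dr$, is not in the paper's proof but is consistent with the remark following it on uniqueness.
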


\begin{proof}
    The existence and uniqueness of weak/strong solutions in $L^2(\R^3,\R^3)$ can be proved in several ways: for instance, it is easy to show that $\mathscr L$ is a bounded perturbation of the infinitesimal generator of a strongly continuous group on $L^2(\R^3,\R^3)$, and the existence (and uniqueness) of solutions follows from classical semigroup theory on Banach space. In this work, we make use instead of the representation formula, by showing that $G(t)\phi_0$ defines a weak/strong solution of LSG in $L^2(\R^3,\R^3)$, depending on the choice of the initial data.
    We start by showing that the operators $T_1(t)$, $T_2(t)$ and $T_3(t)$ satisfy certain properties, namely
    \begin{align*}
        \|T_j(t)\phi\|_{L^2}\lesssim_t \|\phi\|_{L^2}, \qquad \|T_j(t)\phi-T_j(r)\phi\|_{L^2} \to 0 \quad \text{as}\quad r\to t, 
    \end{align*}
    for $j=1,2,3$. 
    
    For the family of operators $\{T_1(t)\}_{t\in\R}$, we notice that $\|T_1(t)\phi\|_{L^2(\R^3,\R^3)}=\|\phi\|_{L^2(\R^3,\R^3)}$ for any $\phi \in L^2(\R^3,\R^3)$, because $x\mapsto e^{-tS}x$ is a volume-preserving diffeomorphism on $\R^3$. We can also prove that
    \begin{align*}
        \lim_{r\to t} \|T_1(t)\phi-T_1(r)\phi\|_{L^2}\leq\lim_{h\to 0} \|T_1(h)\phi-\phi\|_{L^2} =0,
    \end{align*}
    for all $\phi\in L^2(\R^3,\R^3)$. In fact, by density of $\mathcal C^{\infty}_0(\R^3,\R^3)$ in $L^2(\R^3,\R^3)$, for any $\phi\in L^2(\R^3,\R^3)$ and for any $\eps>0$ there exists a function $f\in \mathcal C^{\infty}_0(\R^3,\R^3)$ such that $\|\phi-f\|_{L^2}<\eps$. Therefore, we have that 
    \begin{align*}
        \|T_1(h)\phi - \phi\|_{L^2} \leq \|T_1(h)(\phi-f)\|_{L^2} + \|f-\phi\|_{L^2}+\|T_1(h)f-f\|_{L^2}< 2\eps + \|T_1(h)f-f\|_{L^2}.
    \end{align*}
    Now, since $f$ is smooth, we can apply the mean value theorem and find a $\dl>0$ such that $\|T_1(h)f-f\|_{L^2}<\eps$ if $|h|<\dl$, proving the convergence of the required limit.
    Regarding the operators $\{T_2(t)\}_{t\in\R}$, we clearly have that $\|T_2(t)\phi\|_{L^2(\R^3,\R^3)} \leq e^{|t||S|}\|\phi\|_{L^2(\R^3,\R^3)}$ and, as $r\to t$,
    \begin{align*}
        \|T_2(t)\phi-T_2(r)\phi\|_{L^2(\R^3,\R^3)} = \|(e^{-tS^T}-e^{-rS^T})\phi\|_{L^2(\R^3,\R^3)}\to0.
    \end{align*}
    Finally, by Plancherel theorem, we have that $\| T_3(t)\phi\|_{L^2(\R^3,\R^3)}\leq  e^{|t|\|m\|_{L^{\infty}(\R^3)}}\|\phi\|_{L^2(\R^3,\R^3)}$. Since the symbol $\tilde M$ is smooth in $t$, we also have that, as $r\to t$,
    \begin{align*}
        \|T_3(t)\phi-T_3(r)\phi\|_{L^2(\R^3,\R^3)}\leq \|\tilde M(t;\cdot)-\tilde M(r;\cdot)\|_{L^{\infty}(\R^3)}\|\phi\|_{L^2(\R^3,\R^3)} \to 0.
    \end{align*}
    
    By using the estimates above, we have that the operator $G(t)$ is defined from $L^2(\R^3,\R^3)$ to $L^2(\R^3,\R^3)$ for any $t\in\R$. Moreover, the function $t\mapsto G(t)\phi_0$ is sequentially continuous: let $t_n\to t$ in $\R$, then
    \begin{align*}
        \|G(t_n)\phi_0 -& G(t)\phi_0\|_{L^2} 
        \leq \|(T_1(t_n)-T_1(t)) T_2(t_n) T_3(t_n)\phi_0\|_{L^2}\\
        &+\|T_1(t)(T_2(t_n)-T_2(t)) T_3(t_n)\phi_0\|_{L^2} +\|T_1(t)T_2(t)( T_3(t_n)- T_3(t))\phi_0\|_{L^2}\to 0.
    \end{align*}
    We need to prove that $\phi$ is a weak solution of LSG in $L^2(\R^3,\R^3)$, so we look at the following integral: for any test function $\psi\in\mathcal C^{\infty}_C(\R^3\times [0,\infty),\R^3)$
    \begin{align*}
        \int_{\R}\int_{\R^3} &G(t)\phi_0(x)\cdot \left(\partial_t \psi(x,t) + \mathscr K\psi(x,t)\right)\,dx\,dt=\int_{\R^3} \phi_0(x)\cdot \int_{\R}G(t)'\left(\partial_t\psi(x,t)+\mathscr K\psi(x,t)\right)\,dt\,dx,        
    \end{align*}
    where $G(t)'$ is just the adjoint operator to $G(t)$ in $L^2(\R^3,\R^3)$ for any $t\in\R$, and one can prove that it is given by 
    \begin{align}\label{eq:LSG_G'}
        G(t)'[\phi](x) = e^{-tS}\int_{\R^3} \bar M(t;\xi)\hat {\phi} (\xi) e^{2\pi i \xi\cdot e^{tS}x}\,d\xi.
    \end{align}
    From a manipulation of the pseudo-differential operator $G(t)'$, it follows that $G(t)'\left(\partial_t\psi(x,t)+\mathscr K\psi(x,t)\right) = \partial_t\left(G(t)'\psi(x,t)\right)$, which shows that $G(t)\phi_0$ is a weak solution of LSG in $L^2(\R^3,\R^3)$, as in Definition \ref{de:LSG_solutions}.
    
    We now consider the case of $\phi_0\in D(\mathscr L)$ and observe that $\phi$ is a strong solution of LSG. Through a calculation, one can prove that $\partial_t \phi(\cdot,t) = \mathscr L \phi(\cdot,t)$ for any $t\in\R$, and we can prove continuity of $t\mapsto \partial_t\phi(\cdot,t)$ in a similar fashion as before.

    Finally, we consider a conservative initial datum $\phi_0=\nabla f_0\in L^2(\R^3,\R^3)$ and the corresponding weak solution $\phi(\cdot,t)=G(t)\nabla f_0$. Through a standard computation, one can show that 
    \begin{align*}
        \phi(x,t) = \nabla \int_{\R^3} \tilde M(t;\xi) \hat f_0(\xi) e^{2\pi i \xi\cdot e^{-tS}x}\,d\xi,
    \end{align*}
    so the weak solution is a gradient for all times $t\in\R$.


\end{proof}

\begin{rmk}
    The weak/strong solution constructed in the theorem above is actually unique: this is a straightforward consequence of the fact that $\mathscr L$ is the infinitesimal generator of the strongly continuous group $\{G(t)\}_{t\in\R}$. As we mentioned above, we do not provide a proof of this fact in this work, but we invite the reader to see \cite[Ch.~4]{Pazy12} for the relevant theory.
\end{rmk}

\subsection{Existence of Solutions of LSG in \texorpdfstring{$\mathcal S(\R^3,\R^3)$}{S}}

We now focus on the well-posedness of LSG in the Fr\'echet space $\mathcal S$: as we mentioned in the introduction, we endow $\mathcal S$ with the weak topology generated by the seminorms \eqref{eq:S_seminorms}. For the well-posedness in $\mathcal S$, one way would be to directly apply the theory of locally equicontinuous semigroups on locally convex topological spaces by showing that $L_1$, $L_2$ and $L_3$ are infinitesimal generators of locally equicontinuous $C_0$-groups, and then apply Trotter formula (see \cite[Theorem 20]{AK:Trotter}). The problem with this way of proceeding is that the exponent in \eqref{eq:bound_T1} below depends on the choice of the seminorm $\|\cdot\|_{\al,\be}$. Therefore, we use instead the representation formula \eqref{eq:LSG_rep_form} and show that this gives a strong solution in $\mathcal S$ as by Definition \ref{de:LSG_solutions}. In order to do so, we will need estimates for the families of operators $\{T_1(t)\}_{t\in\R}$, $\{T_2(t)\}_{t\in\R}$ and $\{ T_3(t)\}_{t\in\R}$, in a similar fashion as what we did for the $L^2$-theory.

\begin{thm}\label{thm:LSG_existence_S}
    Given $\phi_0\in \mathcal S$, there exists a unique strong solution $\phi$ in $\mathcal S$ of LSG, as in Definition \ref{de:LSG_solutions}. Moreover, the solution is given by $\phi(t)=G(t)[\phi_0]$, as defined in \eqref{eq:LSG_rep_form}.
\end{thm}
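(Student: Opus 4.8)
The plan is to verify directly that the representation formula $\phi(t)=G(t)[\phi_0]=T_1(t)T_2(t)T_3(t)[\phi_0]$ from \eqref{eq:LSG_rep_form} produces a strong solution in $\mathcal S$, reproducing in the Fr\'echet topology the template used for the $L^2$ theory. Concretely, I would need to establish four things: (a) for each fixed $t$, the operator $G(t)$ maps $\mathcal S$ into $\mathcal S$; (b) the bounds are locally uniform in $t$, i.e. each Schwartz seminorm $\|G(t)\phi_0\|_{\al,\be}$ is controlled by finitely many seminorms of $\phi_0$ with a constant growing at most exponentially in $|t|$; (c) the map $t\mapsto G(t)\phi_0$ is $\mathcal C^1(\R,\mathcal S)$ and satisfies $\frac{d}{dt}\phi=\mathscr L\phi$ with $\phi(0)=\phi_0$ in the topology of $\mathcal S$; and (d) uniqueness.

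For the first two factors this is routine. For $T_2(t)[\phi](x)=e^{-tS^T}\phi(x)$ the seminorm bound $\|T_2(t)\phi\|_{\al,\be}\le e^{|t|\,|S|}\|\phi\|_{\al,\be}$ is immediate, and $t$-continuity follows from continuity of $t\mapsto e^{-tS^T}$. For $T_1(t)[\phi](x)=\phi(e^{-tS}x)$ I would change variables and apply the chain and Leibniz rules: $D^{\be}$ acting on $\phi\circ e^{-tS}$ produces at most $|\be|$ factors of $e^{-tS}$, each bounded by $e^{|t||S|}$, while writing $x^{\al}$ in terms of $y=e^{-tS}x$ turns it into a polynomial of degree $|\al|$ with coefficients of size $O(e^{|t||S||\al|})$. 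This yields an estimate of the form appearing in \eqref{eq:bound_T1}, namely $\|T_1(t)\phi\|_{\al,\be}\lesssim e^{c_{\al,\be}|t|}\sum_{|\al'|\le|\al|,\,|\be'|\le|\be|}\|\phi\|_{\al',\be'}$, and continuity in $t$ is obtained exactly as in the $L^2$ argument by a density/mean-value argument.

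The delicate factor --- and the step I expect to be the main obstacle --- is $T_3(t)[\phi]=\mathscr F^{-1}[\tilde M(t;\cdot)\hat\phi]$, because $\tilde M(t;\cdot)$ from \eqref{eq:LSG_def_symbols} inherits from $m$ in \eqref{eq:def_m} its homogeneity of degree $0$ and is therefore only smooth on $\R^3\setminus\{0\}$, with a genuine singularity at the origin. The natural route is to pass to the Fourier side, where $\|T_3(t)\phi\|_{\al,\be}$ is controlled by sup-norms of expressions of the type $D^{\al}\!\big[\xi^{\be}\tilde M(t;\xi)\hat\phi(\xi)\big]$; here the homogeneity forces $|D^{\al}\tilde M(t;\xi)|\lesssim|\xi|^{-|\al|}$ near $\xi=0$, so the terms in which derivatives fall on the multiplier are singular and the naive estimate fails (for $|\al|\ge 3$ such factors are not even locally integrable). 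I would therefore have to treat the region near the origin separately, exploiting the boundedness of $m$ and the precise homogeneous structure of $\tilde M$ (and, where needed, the fact that the physically relevant data are conservative, so that $\hat\phi$ vanishes at $\xi=0$) to absorb the singular factors. Once the seminorm control of $T_3(t)$ is in hand, combining the three factors gives the mapping and continuity properties (a)--(b) for $G(t)$ by the same three-term splitting used for sequential continuity in the $L^2$ proof.

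Finally, for differentiability (c) I would differentiate the representation formula \eqref{eq:LSG_rep_form} in $t$ under the integral sign, using smoothness of $t\mapsto\tilde M(t;\xi)$ together with $\partial_t\tilde M(t;\xi)=\tilde m(t;\xi)\tilde M(t;\xi)$ and the identities relating $\partial_t(e^{-tS}x)$ and $\partial_t e^{-tS^T}$ to $S$; collecting the three resulting contributions reproduces $L_1+L_2+L_3=\mathscr L$ acting on $\phi(t)$, while the initial condition $G(0)=\mathrm{id}$ is clear. Uniqueness (d) follows from linearity: the difference of two strong solutions with the same datum solves \eqref{eq:LSG_abs} with zero initial value, and one concludes either by the group structure of $\{G(t)\}_{t\in\R}$ or by restricting to $L^2(\R^3,\R^3)$ and invoking the uniqueness already established in Theorem \ref{thm:LSG_existence_L2}.
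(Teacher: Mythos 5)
Your outline follows the paper's proof almost step for step: the paper also works directly from the representation formula $G(t)=T_1(t)T_2(t)T_3(t)$, proves seminorm bounds of exactly the type you describe for $T_1$ and $T_2$ (its \eqref{eq:bound_T1} is obtained by induction on $|\be|$, and \eqref{eq:bound_T2} is your immediate estimate), controls $T_3$ via a Fourier-transform seminorm estimate \eqref{eq:bound_FT_in_S} leading to \eqref{eq:bound_T3}, and proves uniqueness by observing that a Schwartz solution with zero datum is square-integrable and running an $L^2$ energy estimate. The one structural difference is in step (c): rather than differentiating under the integral and recollecting $L_1+L_2+L_3$, the paper first proves a commutation lemma, $\mathscr L G(t)=G(t)\mathscr L$ on $\mathcal S$ (Lemma \ref{lem:LSG_commutativity}), and uses it to write $\frac{d}{dt}G(t)\phi_0=G(t)[\mathscr L\phi_0]$, so that continuity of the derivative in the Fr\'echet topology reduces to the already-established continuity of $t\mapsto G(t)\psi$. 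Your direct differentiation gives the pointwise identity but you should note that $\mathcal C^1(\R,\mathcal S)$ also requires continuity of $t\mapsto \mathscr L G(t)\phi_0$ in every seminorm, which is precisely what the commutation trick buys; the same lemma is then the engine of the $\mathcal S'$ extension.

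The step you single out as the main obstacle --- that $\tilde M(t;\cdot)$ is homogeneous of degree $0$ and singular at the origin, so that derivatives falling on the multiplier produce non-integrable factors $|\xi|^{-|\al|}$ --- is indeed the crux, and you should be aware that the paper does not resolve it either: it simply asserts \eqref{eq:bound_T3} as a consequence of \eqref{eq:bound_FT_in_S}, even though the seminorms of $\tilde M(t;\cdot)\hat\phi$ appearing on the right of \eqref{eq:bound_FT_in_S} are infinite for generic $\phi\in\mathcal S$ whenever $m$ is non-constant (if $\hat\phi(0)\neq 0$ the product $\tilde M(t;\cdot)\hat\phi$ is not even continuous at the origin, so $T_3(t)\phi\notin\mathcal S$ and $G(t)$ cannot map all of $\mathcal S$ into $\mathcal S$). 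Your proposed remedy is not sufficient as stated: restricting to conservative data contradicts the theorem's hypothesis of arbitrary $\phi_0\in\mathcal S$, and a single order of vanishing of $\hat\phi$ at $\xi=0$ absorbs only one power of $|\xi|^{-1}$, not the $|\xi|^{-|\al|}$ arising for higher seminorms. So your proposal faithfully reproduces the paper's strategy and correctly locates its weakest point, but neither your sketch nor the paper's argument actually closes the $T_3$ estimate; any complete proof would have to either restrict the class of symbols $m$ (e.g.\ to the set $\mathscr B$ where $m\equiv 0$), work in a larger target space than $\mathcal S$, or quantify precisely how much decay of $G(t)\phi_0$ survives the origin singularity.
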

 
Before proving the theorem above, we need the following auxiliary result.

\begin{lem}\label{lem:LSG_commutativity}
    For any $t\in\R$, the two operators $\mathscr L$ and $G(t)$ commute on $\mathcal S$. 
\end{lem}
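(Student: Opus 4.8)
The goal is to show that $\mathscr L$ and $G(t)$ commute on $\mathcal S$ for each fixed $t\in\R$. The natural strategy is to pass to the Fourier side and exploit the representation formula \eqref{eq:LSG_rep_form}, where $G(t)=T_1(t)T_2(t)T_3(t)$, and to check commutation factor-by-factor against each of the three additive pieces $L_1,L_2,L_3$ of $\mathscr L$ from Definition \ref{de:LSG_semigroups}(i). First I would record how each $T_j(t)$ acts under $\mathscr F$: the operator $T_3(t)$ is the Fourier multiplier by $\tilde M(t;\cdot)$, the operator $T_2(t)$ is pointwise multiplication by the matrix $e^{-tS^T}$ (which commutes with $\mathscr F$ up to transposition in the frequency variable, since $S$ acts linearly), and $T_1(t)$ is the pullback $\phi\mapsto\phi(e^{-tS}\cdot)$, whose Fourier conjugate is the dual linear change of variables $\hat\phi\mapsto\hat\phi(e^{tS^T}\cdot)$ (using $\det e^{-tS}=1$). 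The key structural observation is that the symbols in \eqref{eq:LSG_def_symbols} are defined precisely so that conjugating the multiplier $m$ by the flow reproduces $\tilde m$: that is, $T_1(t)$ intertwines multiplication by $m$ with multiplication by $m(e^{-tS}\,\cdot\,)$, which is exactly how $\tilde m(t;\cdot)$ and hence $\tilde M(t;\cdot)$ are built.

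The heart of the argument is to verify $G(t)L_3=L_3G(t)$, equivalently that $G(t)$ commutes with the Fourier multiplier $\mathscr M_m$. Since $T_3(t)=\mathscr F^{-1}\mathscr M_{\tilde M(t)}\mathscr F$ and $\mathscr M_m$ are both Fourier multipliers, they commute automatically (multipliers commute with multipliers). For the remaining factors, I would use the intertwining identities: $T_2(t)$ commutes with $\mathscr M_m$ because $m$ is scalar-valued while $e^{-tS^T}$ acts on the vector components, so the two multiplications act on independent structure and commute. The genuinely substantive check is the interaction of $T_1(t)$ with $\mathscr M_m$: on the frequency side $T_1(t)$ conjugates $\mathscr M_m$ into $\mathscr M_{m(e^{-tS}\cdot)}$, and the self-consistency of the definition $\tilde M(t;\xi)=\exp(\int_0^t m(e^{-rS^T}\xi)\,dr)$ under this same conjugation is what makes the full composition $G(t)=T_1T_2T_3$ commute with $\mathscr M_m$ rather than merely each factor separately.

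For the transport term $L_1\phi=-(\bar u\cdot\nabla)\phi$ and the zeroth-order term $L_2\phi=-S^T\phi$, I would argue that $G(t)$ is, by the derivation preceding Theorem \ref{thm:LSG_existence_L2}, exactly the solution operator of the flow generated by $\mathscr L$; the cleanest route is to use that $\{G(t)\}$ forms a group and that $\mathscr L$ is its infinitesimal generator, so that $G(t)\mathscr L\phi=\frac{d}{ds}\big|_{s=0}G(t)G(s)\phi=\frac{d}{ds}\big|_{s=0}G(s)G(t)\phi=\mathscr L G(t)\phi$, valid on the domain (here all of $\mathcal S$, since each $T_j(t)$ maps $\mathcal S$ into $\mathcal S$ by the forthcoming seminorm estimates in Theorem \ref{thm:LSG_existence_S}). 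Care is needed because this group/generator argument is logically what Theorem \ref{thm:LSG_existence_S} is building toward, so to avoid circularity I would instead verify the commutation directly at the level of the explicit symbols, treating $L_1,L_2$ via their action on $e^{tS^T}\xi$ in the exponential kernel of \eqref{eq:LSG_rep_form}.

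The main obstacle I anticipate is purely bookkeeping rather than conceptual: tracking the linear changes of variable $e^{\pm tS}$ and $e^{\pm tS^T}$ consistently through the Fourier transform, and confirming that the differentiations arising from $L_1$ pull down factors of $e^{tS^T}\xi$ that match precisely the frequency-variable substitutions hidden inside $\tilde M(t;\cdot)$. Once the intertwining relation for $T_1(t)$ against $\mathscr M_m$ is pinned down, the evenness of $m$ and the commutativity of Fourier multipliers make the remaining verifications routine.
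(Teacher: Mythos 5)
Your overall strategy (pass to the Fourier side, exploit $G(t)=T_1(t)T_2(t)T_3(t)$ and the fact that the symbols $\tilde m,\tilde M$ are built from $m$ by conjugation with the flow) is the right starting point, and your treatment of $L_2$ and $T_2(t)$ is fine. But the step you call the heart of the argument is false: $G(t)$ does \emph{not} commute with $L_3=\mathscr F^{-1}\circ\mathscr M_m\circ\mathscr F$, and likewise $[L_1,G(t)]\neq 0$; only the \emph{sum} of these two commutators vanishes. Concretely, writing $\Lambda_a:=\mathscr F^{-1}\circ\mathscr M_a\circ\mathscr F$, the intertwining relation you correctly identify gives $L_3T_1(t)=T_1(t)\Lambda_{\tilde m(t;\cdot)}$, whence
\begin{align*}
L_3G(t)=T_1(t)T_2(t)\Lambda_{\tilde m(t;\cdot)\tilde M(t;\cdot)},\qquad
G(t)L_3=T_1(t)T_2(t)\Lambda_{m\,\tilde M(t;\cdot)}.
\end{align*}
These agree only if $\tilde m(t;\cdot)=m$, i.e.\ only if $m$ is invariant under $e^{-tS^T}$, which fails for generic $A$ (the whole point of introducing $\tilde m$ is that it differs from $m$). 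So a verification of the commutation against $L_1$, $L_2$, $L_3$ separately cannot close.

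What actually happens, and what the paper's proof computes, is a cross-cancellation: commuting the transport term $L_1$ past $T_1(t)T_3(t)$ produces an extra pseudo-differential term with symbol $S\xi\cdot\nabla_\xi\tilde M(t;\xi)$ (the $\xi$-derivative hitting the non-flow-invariant multiplier $\tilde M$), while commuting $L_3$ past $T_1(t)T_3(t)$ produces exactly the mismatch $\tilde M(t;\xi)\bigl(\tilde m(t;\xi)-m(\xi)\bigr)$ exhibited above; these cancel because differentiating $\tilde M(t;\xi)=\exp\bigl(\int_0^t m(e^{-rS^T}\xi)\,dr\bigr)$ along the vector field $\xi\mapsto S\xi$ converts the spatial derivative of the symbol into a difference of endpoint values of the integrand, namely
\begin{equation*}
S\xi\cdot\nabla_\xi\tilde M(t;\xi)=\tilde M(t;\xi)\tilde m(t;\xi)-\tilde M(t;\xi)m(\xi).
\end{equation*}
Your proposal never isolates this identity, and without it the two nonzero commutators you are implicitly discarding remain. (Your fallback, the group/generator argument, is indeed circular here, as you yourself note: the group law and the identification of $\mathscr L$ as generator are precisely what Lemma \ref{lem:LSG_commutativity} is feeding into.) To repair the plan, group $L_1+L_3$ together, compute the four compositions $T_1(t)T_3(t)L_1$, $T_1(t)T_3(t)L_3$, $L_1T_1(t)T_3(t)$, $L_3T_1(t)T_3(t)$ as pseudo-differential operators with kernel $e^{2\pi i\xi\cdot e^{-tS}x}$, and verify the single symbol identity above rather than treating the three pieces independently.
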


\begin{proof}
    By Definition \ref{de:LSG_semigroups}, 
    we have that $\mathscr L=L_1+L_2+L_3$, whereas $G(t)=T_1(t)T_2(t) T_3(t)$. One can easily notice that $L_2$ and $T_2(t)$ are just given by multiplication by $S^T$ and $e^{tS^T}$ respectively, and that they commute with all the other operators involved. Therefore, it suffices to prove that the operator
    \begin{equation}\label{eq:LSG_commut_operators}
        T_1(t)T_3(t)L_1+ T_1(t)T_3(t)L_3 - L_1 T_1(t)T_3(t) - L_3T_1(t)T_3(t)
    \end{equation}
    is the zero operator on $\mathcal S$.
    We start by looking at the first term, which is morally given by the representation formula for the solution of \eqref{eq:LSG_abs} applied to the transport term in LSG \eqref{eq:LSG_abs}. For any $\phi\in\mathcal S$ and $x\in\R^3$ we have the following:
    \begin{align*}
        T_1(t)T_3(t)L_1 [\phi](x)
        &= \int_{\R^3}\int_{\R^3} \tilde M(t;\xi) S^T_{jk}y_k \partial_{y_j}\phi(y)
        e^{-2\pi i y\cdot \xi} e^{2\pi i \xi\cdot e^{-tS}x}\,dy\,d\xi\\
        &=\int_{\R^3}\int_{\R^3} \tilde M(t;\xi)\left(2\pi i S^T y\cdot \xi - \Tr S\right)\phi(y) e^{-2\pi i y\cdot \xi}e^{2\pi i \xi\cdot e^{-tS}x}\,dy\,d\xi.
    \end{align*}
    The second term in \eqref{eq:LSG_commut_operators} is given by the composition of two pseudo-differential operators: by standard calculations, one can show that for any $\phi\in\mathcal S$ the following holds:
    \begin{align*}
        T_1(t)T_3(t) L_3[\phi](x) 
        &=\int_{(\R^3)^4} \tilde M(t;\xi) m(\eta)\phi(z) e^{2\pi i z\cdot \eta} e^{-2\pi i \eta\cdot y} e^{-2\pi i y\cdot \xi} e^{2\pi i \xi\cdot e^{-tS}x}\,dz\,d\eta\,dy\,d\xi\\
        &=\int_{\R^3}\int_{\R^3} \tilde M(t;\xi) \phi(z) m(\xi) e^{-2\pi i z\cdot \xi}e^{2\pi i\xi\cdot e^{-tS}x}\,dz\,d\xi.
    \end{align*}
     For the third term in \eqref{eq:LSG_commut_operators}, we use the Einstein summation convention and consider the transport term applied to the representation formula for the solution of \eqref{eq:ALSG}, up to a multiplication by the matrix $e^{-tS}$:  
    \begin{align*}
        L_1T_1(t)T_3(t)[\phi](x) 
        &=(S^T x\cdot \nabla) \int_{\R^3}\int_{\R^3} \tilde M(t;\xi) \phi(y)e^{-2\pi i y\cdot \xi} e^{2\pi i \xi\cdot e^{-tS}x}\,dy\,d\xi\\
        &=-\int_{\R^3}\int_{\R^3} \phi(y) \partial_{\xi_k} \left( \tilde M(t;\xi)e^{-2\pi i y\cdot \xi} S_{kj}\xi_j\right) e^{2\pi i \xi\cdot e^{-tS}x}\,dy\,d\xi\\
        &=\int_{\R^3}\int_{\R^3} \tilde M(t;\xi) \big( 2\pi i S^Ty\cdot \xi- \Tr S\big)\phi(y) e^{-2\pi i y\cdot \xi}e^{2\pi i \xi\cdot e^{-tS}x}\,dy\,d\xi\\
        &\qquad -\int_{\R^3} \big(S\xi\cdot \nabla \tilde M(t;\xi)\big)\hat{\phi}(\xi)e^{2\pi i \xi\cdot e^{-tS}x}\,d\xi.
    \end{align*}
     In the last term in \eqref{eq:LSG_commut_operators}, we have again the composition of two pseudo-differential operators: by using their definition, one can easily see that
    \begin{align*}
        L_3 T_1(t)T_3(t)[\phi](x)
        &=\int_{(\R^3)^4} m(\xi)\tilde M(t;\eta) \phi(z)e^{-2\pi i z\cdot \eta} e^{2\pi i \eta\cdot e^{-tS}y} e^{2\pi i y\cdot \xi}e^{-2\pi i \xi\cdot x}\, dz\,d\eta\,dy\,d\xi\\
        &=\int_{\R^3}\int_{\R^3} \tilde M(t;\eta)\phi(z)\tilde m(t;\eta) e^{-2\pi i z\cdot\eta}e^{2\pi i \eta\cdot e^{-tS}x}\,dz\,d\eta.
    \end{align*}
     Therefore, the operator in \eqref{eq:LSG_commut_operators}  is simply given by the following pseudo-differential operator: 
    \begin{align*}
       \bigg( T_1(t)T_3(t)L_1 +& T_1(t)T_3(t)L_3 - L_1 T_1(t)T_3(t) - L_3T_1(t)T_3(t)\bigg)[\phi](x)\\
        &=\int_{\R^3} \left(\tilde M(t;\xi)m(\xi) + S\xi \cdot \tilde M(t;\xi) -\tilde M(t;\xi)\tilde m(t;\xi)\right)\hat{\phi}(\xi) e^{2\pi i \xi\cdot e^{-tS}x}\,d\xi.
    \end{align*}
    The symbol is in fact null for any $\xi\in\R^3\setminus\{0\}$ and $t\in\R$, because $S\xi \cdot \tilde M(t;\xi)=\tilde M(t;\xi)\tilde m(t;\xi)- \tilde M(t;\xi)m(\xi)$, as one can show by considering the following: 
    \begin{align*}
        S\xi\cdot \nabla\tilde M(t;\xi) 
        &=\tilde M(t;\xi) \int_0^t S\xi\cdot  e^{-rS} \nabla m(e^{-rS^T}\xi)\,dr\\
        &=\tilde M(t;\xi)\int_0^t \partial_t \left(m(e^{-rS^T}\xi)\right)\,dr\\
        &=\tilde M(t;\xi) \tilde m(t;\xi) - \tilde M(t;\xi) m(\xi).
    \end{align*}
    This ends the proof of the lemma.
\end{proof}

\begin{proof}[Proof of Theorem \ref{thm:LSG_existence_S}]
    By a calculation, one can easily prove that the function $\phi(x,t):= G(t)[\phi_0](x)$ is classically differentiable in $x$ and $t$ and it solves the PDE for any $t\in\R$ and $x\in\R^3$. We establish that the solution is indeed a strong solution with respect to the weak topology in the Schwartz space: to prove this, we show that the function $t\mapsto G(t)\phi_0$ is continuously differentiable from $\R$ to $\mathcal S$ by showing that its time derivative is sequentially continuous. We first need to show that the operators in the families $\{T_1(t)\}_{t\in\R}$, $\{T_2(t)\}_{t\in\R}$ and $\{T_3(t)\}_{t\in\R}$ are bounded and $T_i(r)\phi\to T(t)\phi$ as $r\to t$ for any $\phi\in\mathcal S$, $i\in\{1,2,3\}$. In fact, one could show that the families $\{T_1(t)\}_{t\in\R}$, $\{T_2(t)\}_{t\in\R}$ and $\{T_3(t)\}_{t\in\R}$ are locally equicontinuous $C_0$-groups on the locally convex topological space $\mathcal S$, but this fact is not needed in our proof, and we just make use of the estimates.
    
    By induction on $|\beta|\in\N$, one can show that for any $\al,\be\in\N^3$ there exist constants $C(\al)>0$ and $M(\al,\be)\in\N$ and a finite family of seminorms $\left\{\|\cdot\|_{\al^{(1)},\be^{(1)}}\dots\|\cdot\|_{\al^{(M(\al,\be))},\be^{(M(\al,\be))}}\right\}$ such that 
        \begin{equation}\label{eq:bound_T1} 
            \|T_1(t)\phi\|_{\al,\be} \leq e^{|t|\,|S|_{\infty}(|\al|+|\be|)} C(\al) \sum_{i=1}^{M(\al,\be)} \|\phi\|_{\al^{(i)},\be^{(i)}},
        \end{equation}
    for any $\phi\in\mathcal S$ and for any $t\in\R$.
    Again, by induction on $|\beta|$, one can also show that $\|T_1(t)\phi-\phi\|_{\al,\be}\to 0$ as $t\to 0$, for any $\phi\in\mathcal S$.
    The operators $T_2(t)$ are clearly bounded, as 
    \begin{align}\label{eq:bound_T2}
        \|T_2(t)\phi\|_{\al,\be} \leq e^{|t|\,|S|_{\infty}} \|\phi\|_{\al,\be},
    \end{align}
    for any $\phi\in\mathcal S$ and $t\in\R$, and the map $t\mapsto T_2(t)\phi$ is clearly continuous for all $\phi\in\mathcal S$.

    In order to show that the family of operators $\{T_3(t)\}_{t\in\R}$ also satisfies the wanted properties, the main tool is given by the following estimate for $\mathscr F$ and $\mathscr F^{-1}$: for any $\al,\be\in\N^3$ we have that 
    \begin{align}\label{eq:bound_FT_in_S}
        \|\mathscr F[\phi]\|_{\al,\be} \leq\sum_{\substack{k\in\N^3\\k\leq\al,\be}} {C}_k^{\al,\be}\left[ \|\phi\|_{\be-k,\al-k} + \sum_{i,j=1}^3 \|\phi\|_{\be-k+2e_i+2e_j,\al-k}\right],
    \end{align}
    where the constant $C_k^{\al,\be}$ is defined as 
    \begin{equation*}
        C_k^{\al,\be} := \sqrt 2\pi^2 (2\pi)^{|\be-\al|}k!\binom{\al}{k}\binom{\be}{k}.
    \end{equation*}
    In particular, the following bound 
    \begin{equation}\label{eq:bound_T3}
        \| T_3(t)\phi\|_{\al,\be} \leq  C e^{|t|\,\|m\|_{L^{\infty}(\R^3)}} \sum_{i=1}^{N} \|{\phi}\|_{\al^{(i)},\be^{(i)}},
    \end{equation}
    holds for any $\phi\in\mathcal S$ and $t\in\R$. The continuity of the map $t\mapsto T_3(t)\phi$ follows from the fact that the symbol $\tilde M$ is smooth in $t$.
    
    The estimates \eqref{eq:bound_T1}, \eqref{eq:bound_T2} and \eqref{eq:bound_T3} and the fact that the map $t\mapsto T_i(t)\phi_0$ is continuous from $\R$ to $\mathcal S$, for $\phi_0\in\mathcal S$ and $i\in\{1,2,3\}$, allow us to prove that the map $t\mapsto \frac d{dt}G(t)\phi$ is continuous for any $\phi_0\in\mathcal S$: as $r\to t$ in $\R$, we have that    
    \begin{align*}
        \big\|\frac d{dt}G(r)\phi_0-&\frac d{dt}G(t)\phi_0\big\|_{\al,\be}
        =\|\mathscr L G(r)\phi_0-\mathscr L G(t)\phi_0\|_{\al,\be}
        =\|G(r)[\mathscr L \phi_0]- G(t)[\mathscr L \phi_0]\|_{\al,\be}\\
        &\leq\|\big(T_1(r)-T_1(t)\big)T_2(r)T_3(r)[\mathscr L \phi_0]\|_{\al,\be} +\|T_1(t)\big(T_2(r)-T_2(t)\big)T_3(r)[\mathscr L \phi_0]\|_{\al,\be}\\
        &\qquad +\|T_1(t)T_2(t)\big(T_3(r)-T_3(t)\big)[\mathscr L \phi_0]\|_{\al,\be}
    \end{align*}
    converges to $0$ as $r\to t$, for any $\al,\be\in\N^3$. This proves that the function $t\mapsto \frac d{dt}G(t)\phi_0$ is continuous and therefore \eqref{eq:LSG_rep_form} is a strong solution of LSG in $\mathcal S$.
    
    The last part that we need to prove to conclude the proof is the uniqueness of such solution in the space $ \mathcal C^1(\R,\mathcal S)$. Since the operator $\mathscr L$ is linear, it suffices to prove that the only solution of \eqref{eq:LSG_abs} with $\phi_0=0$ is the trivial solution $\phi=0$. Denote by $\phi^{(0)}$ a strong solution of LSG in $\mathcal S$ corresponding to $\phi(0)=0$, then $\phi^{(0)}(t)$ is square-integrable for any $t\in\R$: by an energy estimate, it is easily shown that $\|\phi^{(0)}(t)\|_{L^2(\R^3,\R^3)}=0$ for any $t\in\R$. Therefore $\phi^{(0)}=0$, proving uniqueness of strong solutions in $\mathcal S$. 
\end{proof}

As we mentioned above, the adjoint LSG problem \eqref{eq:ALSG} is mathematically equivalent to LSG \eqref{eq:LSG_abs}, therefore the proof of Theorem \ref{thm:LSG_existence_S} can be adapted to show the following result. 

\begin{cor}\label{cor:ALSG_existence_S}
    Given $\psi_0\in \mathcal S$, there exists a unique strong solution $\psi\in \mathcal C^1(\R,\mathcal S)$ to the abstract Cauchy problem \eqref{eq:ALSG}. Moreover, the solution is given by $\psi(t)=F(t)\psi_0$, where the operator $F(t)$ is defined as
    \begin{align}\label{eq:ALSG_rep_form}
        F(t)\psi(x)=e^{-tS}\int_{\R^3} \bar M(t;\xi)\hat{\psi}(\xi)e^{2\pi i \xi\cdot e^{tS}x}\,d\xi,
    \end{align}
    for any $\psi\in\mathcal S$.
\end{cor}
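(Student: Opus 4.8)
The plan is to follow the proof of Theorem~\ref{thm:LSG_existence_S} essentially verbatim, replacing the triple $(\mathscr L, G(t), \tilde M)$ by its adjoint counterpart $(\mathscr K, F(t), \bar M)$. Concretely, I would first record the factorisation $F(t)=\tilde T_1(t)\tilde T_2(t)\tilde T_3(t)$, where $\tilde T_1(t)[\psi](x):=\psi(e^{tS}x)$, $\tilde T_2(t)[\psi](x):=e^{-tS}\psi(x)$ and $\tilde T_3(t)[\psi]:=\mathscr F^{-1}[\bar M(t;\cdot)\hat\psi]$. These are exactly the families generated by the three summands $K_1=(\bar u\cdot\nabla)$, $K_2=-S$ and $K_3=\mathscr F^{-1}\circ\mathscr M_m\circ\mathscr F$ of $\mathscr K$ (recall $\mathscr F\circ\mathscr M_m\circ\mathscr F^{-1}=\mathscr F^{-1}\circ\mathscr M_m\circ\mathscr F$, since $m$ is even), and $F(t)$ in fact agrees with the $L^2$-adjoint $G(t)'$ of \eqref{eq:LSG_G'}. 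A direct differentiation under the integral sign then shows that $\psi(x,t):=F(t)[\psi_0](x)$ is classically differentiable in $x$ and $t$ and satisfies $\partial_t\psi=\mathscr K\psi$ pointwise.

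Second, I would reprove the seminorm estimates \eqref{eq:bound_T1}, \eqref{eq:bound_T2} and \eqref{eq:bound_T3} for $\tilde T_1,\tilde T_2,\tilde T_3$, together with the strong continuity $\tilde T_i(r)\psi\to\tilde T_i(t)\psi$ in $\mathcal S$ as $r\to t$. Nothing new is required here: the bounds depend only on $|S|_\infty$ (unchanged under transposition or sign reversal), on the symbol-independent Fourier estimate \eqref{eq:bound_FT_in_S}, and on $\|\bar m(t;\cdot)\|_{L^\infty(\R^3)}$, which equals $\|m\|_{L^\infty(\R^3)}$ because $\xi\mapsto e^{tS^T}\xi$ is a linear isomorphism of $\R^3\setminus\{0\}$ and hence does not change the essential range of $m$. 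Consequently $t\mapsto F(t)\psi_0$ is continuous into $\mathcal S$.

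Third — and this is where the real content lies — I would establish the analogue of Lemma~\ref{lem:LSG_commutativity}, namely that $\mathscr K$ and $F(t)$ commute on $\mathcal S$. As in the proof of that lemma, after cancelling the everywhere-commuting multiplication operators $K_2$ and $\tilde T_2$, the commutator reduces to a single Fourier multiplier which vanishes on account of the identity
\begin{equation*}
    S^T\xi\cdot\nabla_\xi\bar M(t;\xi)=\bar M(t;\xi)\big(\bar m(t;\xi)-m(\xi)\big),
\end{equation*}
valid for all $\xi\in\R^3\setminus\{0\}$ and $t\in\R$; this follows from $S^T\xi\cdot\nabla_\xi\bar M=\bar M\int_0^t S^T e^{rS^T}\xi\cdot\nabla m(e^{rS^T}\xi)\,dr=\bar M\int_0^t\partial_r\big[m(e^{rS^T}\xi)\big]\,dr$, using that $S^T$ commutes with $e^{rS^T}$. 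The main obstacle is precisely the bookkeeping in this step: the passage from $\mathscr L$ to $\mathscr K$ acts differently on the transport term (sign flip, and flow $e^{tS}$ rather than $e^{-tS}$) and on the matrix term (transpose), so one must track the transposes and signs carefully when reproducing the two pseudo-differential compositions feeding into the commutator. Granting commutativity, the chain $\frac{d}{dt}F(t)\psi_0=\mathscr K F(t)\psi_0=F(t)\mathscr K\psi_0$ combined with the estimates above gives continuity of $t\mapsto\frac{d}{dt}F(t)\psi_0$, whence $\psi\in\mathcal C^1(\R,\mathcal S)$ is a strong solution.

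Finally, uniqueness is obtained exactly as in Theorem~\ref{thm:LSG_existence_S}: by linearity it suffices to take $\psi_0=0$, and since any strong solution in $\mathcal S$ is square-integrable, an $L^2$ energy estimate closes the argument. The transport term $\ip{\psi}{(\bar u\cdot\nabla)\psi}$ vanishes because $\nabla\cdot\bar u=\Tr S=0$, while the contributions of $-S$ and of the zeroth-order operator $K_3$ are controlled by $|S|$ and $\|m\|_{L^\infty(\R^3)}$ respectively, so Gr\"onwall's inequality forces $\|\psi(t)\|_{L^2(\R^3,\R^3)}=0$ for all $t$. This yields the unique strong solution $\psi(t)=F(t)\psi_0$.
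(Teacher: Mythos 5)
Your proposal is correct and follows exactly the route the paper intends: the paper gives no separate proof of this corollary, remarking only that ``the proof of Theorem \ref{thm:LSG_existence_S} can be adapted,'' and your argument is precisely that adaptation, with the factorisation $F(t)=\tilde T_1(t)\tilde T_2(t)\tilde T_3(t)$, the transferred seminorm estimates, the adjoint analogue of Lemma \ref{lem:LSG_commutativity} via the identity $S^T\xi\cdot\nabla_\xi\bar M=\bar M(\bar m-m)$, and the $L^2$ energy/Gr\"onwall uniqueness argument (using $\Tr S=0$) all checking out. No gaps.
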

The reader can compare the expressions \eqref{eq:ALSG_rep_form} and \eqref{eq:LSG_G'} and observe that $F(t)=G(t)'$, namely $F(t)$ is the adjoint operator of $G(t)$. 


\subsection{Existence of Solutions of LSG  in \texorpdfstring{$\mathcal S'(\R^3,\R^3)$}{S'}}

The existence of strong solutions of \eqref{eq:LSG_abs} in $\mathcal S$ allows us to construct strong solutions to LSG in $\mathcal S'$. 

\begin{proof}[Proof of Theorem \ref{thm:LSG_existence_S'}]
    For any $t\in\R$, we consider the operator $F(t)$ defined in Corollary \ref{cor:ALSG_existence_S}, and we prove that $t\mapsto F(t)'T_0$ is a strong solution of LSG in $\mathcal S'$, in the sense of the Definition \ref{de:LSG_solutions}. With $F(t)'$ we denote the operator on $\mathcal S'$ defined by duality as 
    \begin{align*}
        \ip{F(t)'\eta}{\psi}=\ip{\eta}{F(t)\psi}, \qquad \forall \psi\in\mathcal S,
    \end{align*}
    for a distribution $\eta\in\mathcal S'$. In fact, the operator $F(t)'$ can be seen as an ``extension'' of the operator $G(t)$ to the space of tempered distributions. In order to prove that $F(t)'\eta_0$ is a strong solution of LSG, we apply Lemma \ref{lem:LSG_commutativity}: in fact, for any $\eta\in\mathcal S'$ and $t\in\R$, the following sequence of identities 
    \begin{align*}
        \ip{\frac d{dt} F(t)'\eta}{\psi} &=\ip{\eta}{\frac d{dt}F(t)\psi}=\ip{\eta}{\mathscr K F(t)\psi}=\ip{\eta}{F(t)\mathscr K\psi}=\ip{\mathscr L F(t)' \eta}{\psi}
    \end{align*}
    holds for any test function $\psi\in\mathcal S$. Continuous  differentiability of the function $t\mapsto F(t)'\eta_0$ from $\R$ to $\mathcal S'$ is a consequence of the continuous differentiability of $t\mapsto F(t)\psi_0$ from $\R$ to $\mathcal S$ for any $\psi_0\in\mathcal S$, as proved in Theorem \ref{thm:LSG_existence_S}.
\end{proof}

As we mentioned, $F(t)'$ is formally an extension of $G(t)$ to $\mathcal S'$. This is clarified by the following corollary, that connects the $L^2$ theory of Theorem \ref{thm:LSG_existence_L2} and the $\mathcal S'$ theory of Theorem \ref{thm:LSG_existence_S'}.

\begin{cor}
    Let $\eta_0$ be a regular distribution corresponding to the function $\phi_0\in L^2(\R^3,\R^3)$. Then the solution $\eta$ constructed in the proof of Theorem \ref{thm:LSG_existence_S'} is regular for all times $t\geq 0$, i.e. $\eta(t)=\eta_{\phi(t)}$. The function $\phi$ is given by the representation formula $\phi(t)=G(t)\phi_0$, as defined in \eqref{eq:LSG_rep_form} and it is the unique weak solution of LSG in $L^2$. If $\phi_0\in D(\mathscr L)\subset L^2(\R^3,\R^3)$, then $\phi$ is the unique strong solution of LSG in $L^2$. 
\end{cor}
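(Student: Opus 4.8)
The plan is to show directly that applying the dual operator $F(t)'$ to the regular distribution $\eta_{\phi_0}$ produces exactly the regular distribution associated to the $L^2$ function $G(t)\phi_0$, and then to transfer the remaining assertions verbatim from the $L^2$ theory of Theorem~\ref{thm:LSG_existence_L2}.

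First I would unwind the definition of the solution constructed in the proof of Theorem~\ref{thm:LSG_existence_S'}. There $\eta(t)=F(t)'\eta_0$, so for every test function $\psi\in\mathcal S$ the defining duality gives
\[
    \ip{\eta(t)}{\psi}=\ip{F(t)'\eta_0}{\psi}=\ip{\eta_0}{F(t)\psi}=\int_{\R^3}\phi_0(x)\cdot F(t)\psi(x)\,dx,
\]
where I have used that $\eta_0=\eta_{\phi_0}$ is regular and that $F(t)\psi\in\mathcal S\subset L^2(\R^3,\R^3)$ by Corollary~\ref{cor:ALSG_existence_S}.

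Next I would invoke the identity $F(t)=G(t)'$ recorded immediately after Corollary~\ref{cor:ALSG_existence_S}, where $G(t)'$ is the $L^2$-adjoint of $G(t)$ written out in \eqref{eq:LSG_G'}. Since $\phi_0\in L^2(\R^3,\R^3)$, $\psi\in\mathcal S\subset L^2(\R^3,\R^3)$, and both $G(t)$ and $G(t)'$ are bounded operators on $L^2(\R^3,\R^3)$ by the estimates established in the proof of Theorem~\ref{thm:LSG_existence_L2}, the adjoint relation $\int_{\R^3}\phi_0\cdot G(t)'\psi\,dx=\int_{\R^3}G(t)\phi_0\cdot\psi\,dx$ is legitimate, and combining it with the previous display yields
\[
    \ip{\eta(t)}{\psi}=\int_{\R^3}\phi_0(x)\cdot G(t)'\psi(x)\,dx=\int_{\R^3}(G(t)\phi_0)(x)\cdot\psi(x)\,dx=\ip{\eta_{G(t)\phi_0}}{\psi}
\]
for every $\psi\in\mathcal S$. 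Hence $\eta(t)=\eta_{\phi(t)}$ with $\phi(t)=G(t)\phi_0$, which is precisely the claimed regularity together with the representation formula.

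Finally I would read off the remaining statements from the $L^2$ theory: by Theorem~\ref{thm:LSG_existence_L2}, $\phi(t)=G(t)\phi_0$ is a weak solution of LSG in $L^2(\R^3,\R^3)$, and it is a strong solution whenever $\phi_0\in D(\mathscr L)$; uniqueness in both cases follows from the Remark after Theorem~\ref{thm:LSG_existence_L2}, in which $\{G(t)\}_{t\in\R}$ is identified as the strongly continuous group generated by $\mathscr L$. I expect the only genuinely delicate point to be the justification of the $L^2$-adjoint step, namely that $F(t)$ — a priori only the solution operator of the adjoint problem on $\mathcal S$ — agrees as an operator on $L^2(\R^3,\R^3)$ with the Hilbert-space adjoint $G(t)'$, so that the operator may be moved off the test function $\psi$ and onto $\phi_0$. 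This rests on the $L^2$-boundedness of $G(t)$ and $G(t)'$ proved in Theorem~\ref{thm:LSG_existence_L2} and on the density of $\mathcal S$ in $L^2(\R^3,\R^3)$; the rest of the argument is a direct unravelling of the definitions.
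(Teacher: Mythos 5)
Your argument coincides with the paper's own proof: both unwind $\ip{\eta(t)}{\psi}=\ip{\eta_0}{F(t)\psi}$, use $F(t)=G(t)'$ to move the operator onto $\phi_0$ and identify $\eta(t)=\eta_{G(t)\phi_0}$, then quote Theorem \ref{thm:LSG_existence_L2} and the uniqueness remark for the remaining claims. Your explicit justification of the $L^2$-adjoint step is the only elaboration beyond what the paper calls a ``standard calculation''.
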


\begin{proof}  
    The definition of the solution $\eta$ that was constructed in the proof of Theorem \ref{thm:LSG_existence_S'} and a standard calculation allow us to write for any test function $\psi\in\mathcal S$ and for any $t\in\R$
    \begin{align*}
        \ip{\eta(t)}{\psi} = \ip{F(t)'\eta_0}{\psi} = \ip{\eta_0}{F(t)\psi} = \int_{\R^3} \phi_0\cdot F(t)\psi = \int_{\R^3} G(t)\phi_0\cdot\psi= \ip{\eta_{G(t)\phi_0}}{\psi}.
    \end{align*}
    By Theorem \ref{thm:LSG_existence_L2}, we have that $G(t)\phi_0$ is a weak (strong) solution of LSG in $L^2(\R^3,\R^3)$ if $\phi_0\in L^2(\R^3,\R^3)$ ($\phi_0\in D(\mathscr L)$), and if $\phi_0$ is conservative so is $G(t)\phi_0$ for all times $t\in\R$ by Theorem \ref{thm:LSG_existence_L2}.    

\end{proof}

\section{Stability}\label{sec:LSG_stability}

As we now have an existence theory of solutions of LSG, we can study the long-time behaviour of its solutions and comment on stability of the steady solutions introduced in \eqref{eq:steadysol}. A comprehensive theory of stability of such solutions is not the purpose of this work, and we instead consider the particular case of plane-wave perturbations. We begin by characterising the specific forms of the amplitude $a$ and frequency $k$ that give rise to solutions of LSG of the form \eqref{eq:LSG_planewavesol}, writing their form explicitly in terms of their initial values $a(0)$ and $k(0)$. We then look at their stability, presenting a proof of Theorem \ref{thm:LSG_stability}.

\subsection{Plane-wave Solutions}

We now consider a specific type of solutions to LSG: we are interested in solutions $\phi$ of the form \eqref{eq:LSG_planewavesol}. When we say that $\phi$ is a solution, we mean that the corresponding regular distribution $\eta_{\phi}$ is a strong solution of LSG in $\mathcal S'$ with initial datum $\eta_{\phi(\cdot,0)}$. We first characterise the functions $a$ and $k$ that generate a strong solution $\eta_{\phi}$.

\begin{pr}
    The function $\phi$ of the form in \eqref{eq:LSG_planewavesol} corresponds to a strong solution $\eta_{\phi}$ in $\mathcal S'$ if and only if the functions $a$ and $k$ are of the form
    \begin{equation}\label{eq:LSG_ak}
        a(t)=a_0\tilde M(t;k_0), \qquad k(t)=e^{-tS^T}k_0,
    \end{equation}
    for $a_0\in\R\setminus\{0\}$ and $k_0\in\R^3\setminus\{0\}$, and $\tilde M$ defined in \eqref{eq:LSG_def_symbols}.
\end{pr}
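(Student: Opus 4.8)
The plan is to substitute the ansatz \eqref{eq:LSG_planewavesol} directly into the equation and read off the constraints on $a$ and $k$ by separating the resulting identity into its $x$-linear and $x$-constant parts. First I would record that, for each fixed $t$, the field $\phi(\cdot,t)=2\pi i\,a(t)\,k(t)\,e^{2\pi i k(t)\cdot x}$ is a smooth bounded function, hence a regular tempered distribution, and that $t\mapsto\eta_{\phi(\cdot,t)}$ is $\mathcal C^1$ into $\mathcal S'$ as soon as $a,k\in\mathcal C^1(\R)$. Since $\frac{d}{dt}\eta_\phi=\eta_{\partial_t\phi}$ and each of $L_1\phi$, $L_2\phi$, $L_3\phi$ is a regular distribution arising from a smooth function of at most linear growth in $x$, the equation $\frac{d}{dt}\eta_\phi=\mathscr L\eta_\phi$ in $\mathcal S'$ is equivalent (two such regular distributions agreeing iff the continuous functions agree everywhere) to the pointwise identity $\partial_t\phi(x,t)=\mathscr L\phi(x,t)$; this reduces the whole problem to an elementary computation.

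The key step is the action of the order-zero operator $L_3=\mathscr F^{-1}\circ\mathscr M_m\circ\mathscr F$ on a plane wave. With the convention \eqref{eq:def_FT} one has $\mathscr F[e^{2\pi i k\cdot x}]=\delta_k$, and because $k(t)\neq 0$ (this is where the hypothesis $k_0\neq 0$ enters, together with the invertibility of $e^{-tS^T}$) the symbol $m$, which is smooth on $\R^3\setminus\{0\}$, may be evaluated at $k(t)$, giving $\mathscr M_m[\delta_k]=m(k)\,\delta_k$ and hence $L_3\phi=m(k(t))\,\phi$. I would then compute the remaining terms directly: writing $\bar u(x)=Sx$, the transport term produces $L_1\phi=-(2\pi i)^2 a\,k\,(S^Tk\cdot x)\,e^{2\pi i k\cdot x}$, the multiplication term gives $L_2\phi=-2\pi i\,a\,(S^Tk)\,e^{2\pi i k\cdot x}$, while the time derivative yields $\partial_t\phi=2\pi i(\dot a\,k+a\,\dot k)\,e^{2\pi i k\cdot x}+(2\pi i)^2 a\,k\,(\dot k\cdot x)\,e^{2\pi i k\cdot x}$.

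Finally I would match the two sides. Read pointwise, both $\partial_t\phi$ and $\mathscr L\phi$ are of the form $(\text{vector polynomial of degree}\le 1\text{ in }x)\,e^{2\pi i k\cdot x}$, so by linear independence of $1,x_1,x_2,x_3$ the coefficient of each power must match separately. Equating the degree-one coefficients forces $\dot k\cdot x=-S^Tk\cdot x$ for all $x$, i.e. the linear ODE $\dot k=-S^Tk$, whose unique solution with $k(0)=k_0$ is $k(t)=e^{-tS^T}k_0$. Substituting this back cancels the $a\,\dot k$ term against $L_2\phi$, and the $x$-constant part collapses to $\dot a\,k=m(k)\,a\,k$; since $k\neq 0$ this is the scalar ODE $\dot a=m(k(t))\,a$. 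Using $m(e^{-tS^T}k_0)=\tilde m(t;k_0)$ from \eqref{eq:LSG_def_symbols}, its unique solution with $a(0)=a_0$ is $a(t)=a_0\,e^{\int_0^t\tilde m(r;k_0)\,dr}=a_0\,\tilde M(t;k_0)$, which is exactly \eqref{eq:LSG_ak}; conversely, these choices make every step reversible, giving the stated equivalence. The only genuinely delicate points are the reduction of the $\mathcal S'$ identity to the pointwise one (done via the regular-distribution structure, or equivalently by testing against Schwartz functions $\psi$ for which $\mathscr F^{-1}\psi$ and $\nabla\mathscr F^{-1}\psi$ take prescribed independent values at $k(t)$) and ensuring that the singular symbol $m$ is only ever evaluated away from the origin, which is guaranteed by $k(t)\neq 0$.
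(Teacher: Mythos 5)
Your argument is correct and follows essentially the same route as the paper: reduce the $\mathcal S'$ identity to the pointwise equation $\partial_t\phi=\mathscr L\phi$ via the regular-distribution structure, observe that $L_3$ acts on the plane wave by multiplication by $m(k(t))$ (legitimate since $k(t)=e^{-tS^T}k_0\neq0$), and separate the resulting identity into independent parts to obtain the two ODEs $\dot k=-S^Tk$ and $\dot a=m(k(t))a$, whose unique solutions give \eqref{eq:LSG_ak}. The only cosmetic difference is that you split by polynomial degree in $x$ where the paper splits into real and imaginary parts — these are the same computation — and you supply somewhat more justification for the distributional-to-pointwise reduction than the paper does.
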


\begin{proof}
 If $\eta_{\phi}$ is the regular distribution corresponding to a plane-wave solution $\phi$, then $\eta_{\phi}$ is a strong solution of LSG if and only if for any test function $\psi\in\mathcal S$ and $t\in\R$ we have that
 \begin{align*}
    0&=\ip{\frac d{dt} \eta_{\phi(\cdot,t)} }{\psi} - \ip{\eta_{\phi(\cdot,t)}}{\mathscr K\psi}=\int_{\R^3} \left[\partial_t \phi(x,t) - \mathscr L \phi(x,t)\right]\cdot \psi(x)\,dx,
 \end{align*}
 which holds if and only if $\partial_t \phi -\mathscr L \phi=0$. 
 From a direct computation and separating the real and imaginary parts of the quantity $\partial_t \phi -\mathscr L \phi$, one proves that $\phi$ can generate a strong regular solution of LSG in $\mathcal S'$ if and only if $a$ and $k$ solve the following initial value problems:
\begin{equation*}
    \left\{\begin{array}{l}
            a'(t) = m(k(t))a(t),\\
            a(0)=a_0,
           \end{array}
    \right. 
    \qquad 
    \left\{\begin{array}{l}
            k'(t) = -S^T k(t),\\
            k(0)= k_0.
           \end{array}
    \right.
\end{equation*}
The functions in \eqref{eq:LSG_ak} are the unique solutions to the Cauchy problems above.
\end{proof}
Using the definition of $a$ and $k$ in \eqref{eq:LSG_ak}, we notice that we can write the solution $\phi$ as 
\begin{equation*}
    \phi(x,t) = a_0 \tilde M(t;k_0) e^{-tS^t}k_0 e^{2\pi i k_0\cdot e^{-tSx}},
\end{equation*}
which formally coincides with $G(t)[a_0 k_0e^{2\pi i k_0\cdot x}]$, the representation formula that we found for solutions in $L^2$ and in $\mathcal S$.

\subsection{Stability of the Plane-wave Solutions}

We proceed now to prove Theorem \ref{thm:LSG_stability}, distinguishing between elliptic and hyperbolic flows, as we defined them in \eqref{eq:flowsets}.

\begin{proof}[Proof of Theorem \ref{thm:LSG_stability}]
 We use different arguments for the two types of flows, so we look at one case at the time. 
 
 {\em I. Hyperbolic flows.} 
    For a matrix $A\in\mathscr A_+\cap \mathscr G$, the spectrum of $S$ is $\{0,\ld,-\ld\}$, with $\ld>0$. We denote with $k_0^{\pm}$ an eigenvector of $S^T$ corresponding to the eigenvalue $\pm \ld$, and observe that $\tilde m(t;k_0^{\pm})= \pm 2\ld$: in fact,
    \begin{align*}
        \pm \ld 
        &= \frac{S^T k_0^{\pm}\cdot A^{-1}k_0^{\pm}}{k_0^{\pm}\cdot A^{-1}k_0^{\pm}}\\
        &= \frac{k_0^{\pm}\cdot (A^{-1}J - A^{-1}J A^{-1})k_0^{\pm}}{k_0^{\pm}\cdot A^{-1}k_0^{\pm}}\\
        &=\frac{e^{\mp t\ld}k_0^{\pm}\cdot A^{-1}J e^{\mp t\ld}k_0^{\pm}}{e^{\mp t\ld}k_0^{\pm}\cdot A^{-1}e^{\mp t\ld}k_0^{\pm}} \\
        &=\frac{e^{-tS^T}k_0^{\pm}\cdot A^{-1}J e^{-tS^T}k_0^{\pm}}{e^{-tS^T}k_0^{\pm}\cdot A^{-1}e^{-tS^T}k_0^{\pm}} \\
        &=\frac 12 m(e^{-tS^T}k_0^{\pm}) = \frac 12 \tilde m(t;k_0^{\pm}).
    \end{align*}
    This allows us to write $\tilde M(t;k_0^{\pm}) = e^{\pm 2t\ld}$ and to estimate the $L^{\infty}$-norm of the solution $\phi$ at all times:
    \begin{align*}
        \|\phi(\cdot,t)\|_{L^{\infty}(\R^3,\R^3)} 
        =\sup_{x\in\R^3} |\phi(x,t)|
        = |a_0| e^{\pm 2 t\ld} e^{\mp t\ld}|k_0^{\pm}|  
        = |a_0| |k_0^{\pm}|e^{\pm t\ld}.
    \end{align*}
    Hence, if we choose the solution $\phi$ corresponding to an initial datum with $k_0$ eigenvector corresponding to the positive eigenvalue, the solution grows exponentially in time. Likewise, if $k_0$ is chosen in the eigenspace corresponding to the negative eigenvalue, the solution decays exponentially in time. 
    
    If instead $A\in\mathscr A_+\cap \mathscr B$, then $a$ is constant as $m=0$, and the solution $\phi$ is given by 
    \begin{align*}
        \phi(x,t) = a_0 e^{-tS^T} k_0.
    \end{align*}
    If $k_0^{\pm}$ is eigenvector of $S^T$ with eigenvalue $\pm\ld$, then $|\phi(x,t)|= |a_0|e^{\mp t\ld}|k_0|$. Therefore the solution with initial datum $\phi_0(x)=a_0k_0e^{2\pi i k_0\cdot x}$, with $k_0$ eigenvector of $S^T$ corresponding to the negative eigenvalue, grows exponentially in $L^{\infty}$-norm.

 {\em II. Elliptic flows.}
    For a general $A\in\mathscr A_-\cap \mathscr G$, the spectrum of the matrix $S$ is $\{0,i\ld,-i\ld\}$, with $\ld>0$. The trajectories $k(t)$ are bounded and periodic with period $\tau =\frac{2\pi}{\ld}$, hence the long-term behaviour of $\phi$ depends on $\tilde M(t;k_0)$, as 
    \begin{align*}
        |\phi(x,t)| = |k(t)| |a_0| \tilde M(t;k_0) = |k(t)||a_0|e^{\int_0^t m(k(r))\,dr}. 
    \end{align*}
    Since $k(t)$ is $\tau$-periodic, the integral of $m(k(t))$ can be partitioned as below 
    \begin{align*}
        \int_0^t m(k(r))\,dr = N\int_0^{\tau} m(k(r))\,dr + \int_0 ^{t-N\tau} m(k(r))\,dr, 
    \end{align*}
    where $N=\left\lfloor \frac t{\tau}\right\rfloor$. The behaviour of the solution depends on the sign of the integral of $m(k(t))$ over the period $[0,\tau]$:
    we show that 
    \begin{align}\label{eq:LSG_ell_mean}
        \int_0^{\tau} m(k(r))\,dr = 0,
    \end{align}
    therefore the integral of $m(k(r))$ over $[0,t]$ oscillates around $0$, proving that $\phi$ is bounded for any initial data $a_0$ and $k_0$:
    \begin{align*}
        \|\phi\|_{L^{\infty}(\R^3\times\R,\R^3)}\leq \max_{[0,\tau]}|k| |a_0| e^{\tau\|m\|_{L^{\infty}}}.
    \end{align*}
    
    We need to prove that \eqref{eq:LSG_ell_mean} holds for any initial data $a_0$ and $k_0$. First of all, we observe that we can focus on the numerator: if $k_0\neq 0$, then $k(t)\neq 0$ for all $t$, and there exists a constant $C>1$ such that for all $t\in\R$
    \begin{align*}
        0< \frac 1C\leq k(t)\cdot A^{-1}k(t)\leq C<\infty. 
    \end{align*}
    We use the notation $p(x):=x\cdot A^{-1}J x$ for the numerator of $m$, and we show that there exists a function $f:\R\to\R$ such that $p(k(t))=f'(t)$. Since $\sigma(S^T)=\{0,i\ld, -i\ld\}$, there exists an invertible matrix $V\in\R^{3\times 3}$ such that $S^T= -\mu VJV^{-1}$, where $J$ is defined in \eqref{eq:def_J}, and therefore
    \begin{align*}
        p(k(t))&= k_0\cdot e^{-tS}A^{-1}Je^{-tS^T}k_0 = k_0\cdot (V e^{\mu t J} V^{-1})^T A^{-1}J Ve^{\mu t J}V^{-1} k_0 \\
        &= k_0 \cdot V^{-T} e^{-\mu tJ} V^TA^-1 J V e^{\mu tJ} V^{-1} k_0,
    \end{align*}
    where 
    \begin{align*}
        e^{\mu t J} = \begin{pmatrix}
                       \cos(\mu t) & -\sin(\mu t) & 0\\
                       \sin(\mu t) &  \cos(\mu t) & 0\\
                       0           &              & 1
                      \end{pmatrix}.
    \end{align*}
    Hence, $p(k(t)) = c_1 \cos^2(\mu t) + c_2 \cos(\mu t)\sin(\mu t) + c_3\cos (\mu t) + c_4 \sin(\mu t) + c_5$, with $c_1,\dots,c_5\in\R$ constants depending on $k_0$ and $A$. This expression admits a primitive $f(t)$ given by
    \begin{align*}
        f(t) = c_1 \frac{2\mu t + \sin(2\mu t)}{2\mu} - c_2 \frac{\cos^2(\mu t)}{2\mu} + c_3\frac{\sin (\mu t)}{\mu} - c_4 \frac{\cos(\mu t)}{\mu} + c_5 t + c_6,
    \end{align*}
    and \eqref{eq:LSG_ell_mean} holds if and only if $f(\tau) = f(0)$, namely if and only if $c_1 +2c_5=0$. This works for any choice of the matrix $A\in\mathscr A_-\cap\mathscr G$.
    
    Finally, if $A\in\mathscr A_-\cap \mathscr B$, then $a$ is constant and $|\phi(x,t)| = |a_0| |k(t)|$ is bounded, concluding the proof. 

\end{proof}

\section{Comparison with the Quasi-geostrophic Approximation}\label{sec:LSG_LQG}

We now present a brief comparison between the stability of the family of steady solutions \eqref{eq:steadysol} which are solutions of both SG and the quasi-geostrophic equations (QG). The two models are obtained as asymptotic limits of the $3$-D Euler equations for small Rossby number, and the theory presented in this paper can be proved for QG following exactly the same steps. In fact, rather than writing QG in terms of both full and geostrophic velocities  (as presented, for instance, in \cite{Vallis}), one can show that the equations can be written in terms of the full velocity $\mathscr U[\nabla\phi]$ and the gradient $\nabla{{\phi}}$ as follows:
\begin{equation}\label{eq:QG2}
    \frac{\partial}{\partial t} \nabla {\phi} + (J\nabla{\phi}\cdot \nabla)\nabla{\phi} + B\mathscr U[\nabla {\phi}] = J\nabla{\phi},
\end{equation}
where $B=\text{diag}(1,1,N^2)$, $N$ is the Brunt--V\"ais\"al\"a frequency and the operator $\mathscr U: \nabla{\phi}\mapsto u$ is the solution operator associated to the div-curl system
\begin{equation*}
\left\{\begin{array}{l}
    \nabla\times(B u) = \nabla\times J\nabla{\phi} - \nabla\times D^2{\phi} J\nabla {\phi},\vspace{2mm}\\
    \nabla\cdot u=0,
    \end{array}\right.
\end{equation*}
endowed with decay conditions on the velocity field. 
The streamfunction $\phi$ is related to the geopotential $P$ in the semi-geostrophic equation \eqref{eq:SG} through the identity 
\begin{equation*}
    P(x,t) = {\phi}(x,t) + \frac 12 x\cdot Bx.
\end{equation*}
Therefore, the steady solutions for QG corresponding to \eqref{eq:steadysol} are given by
\begin{align}\label{eq:QG_steadysol}
    \bar{\phi}(x,t) = \frac 12 x\cdot \tilde A x,
\end{align}
with  $\tilde A = A-B$ and $A$ a positive definite symmetric matrix. 
We mention that $\phi$ is not the physical pressure $p$, which is instead given by $p(x,t) = \phi(x,t) + \frac{N^2}2 x_3^2$, for any $x=(x_1,x_2,x_3)\in\R^3$ and $t\in\R$.
 
 \subsection{Linearisation of QG}
 
 In a similar fashion as we did in Section \ref{sec:LSG_deriv}, the quasi-geostrophic equation \eqref{eq:QG2} can be linearised at a steady solution $\overline{\phi}$ in \eqref{eq:QG_steadysol} and written as the following linear problem in the unknown vector field $\psi$
 \begin{equation}\label{eq:LQG}
  \left\{\begin{array}{l}
          \frac{\partial}{\partial t}\psi + (\overline u_g\cdot \nabla)\psi(x) + M^T\psi(x) = \mathscr F^{-1}\mathscr M_{m_{QG}}\mathscr F[\psi](x),\vspace{2mm}\\
          \psi(\cdot,0)= \psi_0,
         \end{array}
\right.
 \end{equation}
 where the steady geostrophic velocity $\overline u_g$ is given by $\overline u_g(x)=Mx$, with the matrix $M$ defined as $M:= J(A-B)$. The symbol $m_{QG}$ has a similar structure as the symbol $m$ defined in \eqref{eq:def_m} for LSG:
 \begin{align}\label{eq:def_m_QG}
  m_{QG}(x) := 2\frac{x\cdot MB^{-1} x}{x\cdot B^{-1}x}.
 \end{align}
 We refer to the initial value problem \eqref{eq:LQG} as the \emph{linearised quasi-geostrophic equation}, or LQG.
 
 By studying the spectrum of the matrix $M$, the reader can show that we can again identify three possible behaviours: we have that $\sigma(M)=\{0,\sqrt{\mu_{QG}}, -\sqrt{\mu_{QG}}\}$, with $\mu_{QG}=b^2-(a-1)(d-1)$, where $a,b,c,d,e,f$ are the coefficients of the matrix $A$ as in \eqref{eq:LSG_def_A}, and therefore the set of positive definite symmetric matrices can be partitioned into the three disjoint sets:
\begin{align}\label{eq:QG_flowsets}
    &\mathscr A_+^{QG} :=\left\{A\in\text{Sym}_+(3):\mu(A)>0\right\},\\
    &\mathscr A_-^{QG} :=\left\{A\in\text{Sym}_+(3):\mu(A)<0\right\},\\
    &\mathscr A_0^{QG} :=\left\{A\in\text{Sym}_+(3):\mu(A)=0\right\}.
\end{align}
We call hyperbolic and elliptic, respectively, the flows associated to the first and second sets, as we do in the SG theory. As we observed for SG, the set $\mathscr A_0^{QG}$ corresponds to $\overline u^g=0$ and it is not interesting, therefore we focus on the two open sets $\mathscr A_+^{QG}$ and $\mathscr A_-^{QG}$.

It is a straight-forward exercise to prove that the results that we prove for LSG in Section \ref{sec:LSG_existence} and Section \ref{sec:LSG_stability} hold also for LQG. In particular, we define the operator
\begin{align}\label{eq:LQG_L}
 \mathscr L_{QG} \psi := -(\overline u_g\cdot \nabla)\psi(x) - M^T\psi(x) + \mathscr F^{-1}\mathscr M_{m_{QG}}\mathscr F[\psi](x),
\end{align}
and we can derive the representation formula for the solution associated to the initial data $\psi_0$ as 
\begin{align*}
    \psi(x,t) = \int_{\R^3} e^{-tM^T}\widehat{\psi_0}(\xi)\widetilde M_{QG}(t;\xi) e^{2\pi i \xi\cdot e^{-tM}x}\,d\xi,
\end{align*}
where $\widetilde M_{QG}$ is defined depending on $m_{QG}$ in \eqref{eq:def_m_QG}:
\begin{align*}
    \widetilde m_{QG}(t;\xi)=m_{QG}(e^{-tM^T}\xi),\qquad \widetilde M_{QG}(t;\xi) = e^{\int_0^t \widetilde m_{QG}(r;\xi)\,dr}.
\end{align*}

\begin{thm}\label{thm:LQG_existence_S'}
     Given an initial tempered distribution $\eta_0\in\mathcal S'(\R^3,\R^3)$, there exists a strong solution in $\mathcal S'(\R^3,\R^3)$ of \eqref{eq:LQG}, i.e. there exists a $\eta\in\mathcal C^1(\R;\mathcal S'(\R^3,\R^3))$ such that for all $t\in\R$ we have
     \begin{align*}
      \ip{\frac{d}{dt}\eta(t)}{\psi} = \ip{\mathscr L_{QG}\eta(t)}{\psi}, \qquad \forall \psi\in\mathcal S(\R^3,\R^3),
     \end{align*}
     and $\eta(0)=\eta_0$ in $\mathcal S'(\R^3,\R^3)$.
\end{thm}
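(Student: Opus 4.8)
The plan is to transcribe the $\mathcal S'$-existence argument for LSG (the proof of Theorem \ref{thm:LSG_existence_S'}) essentially verbatim, replacing $S$ by $M=J(A-B)$, the symbol $m$ by $m_{QG}$ from \eqref{eq:def_m_QG}, and the operator $\mathscr L$ by $\mathscr L_{QG}$ from \eqref{eq:LQG_L}. Concretely, I would introduce the QG counterparts of the propagators of Definition \ref{de:LSG_semigroups}, namely $G_{QG}(t)\psi(x)=\int_{\R^3}e^{-tM^T}\hat\psi(\xi)\,\tilde M_{QG}(t;\xi)\,e^{2\pi i\xi\cdot e^{-tM}x}\,d\xi$, its formal adjoint $F_{QG}(t)$ (the analogue of \eqref{eq:ALSG_rep_form}), and the adjoint operator $\mathscr K_{QG}\psi=(\overline u_g\cdot\nabla)\psi-M\psi+\mathscr F\mathscr M_{m_{QG}}\mathscr F^{-1}[\psi]$. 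The whole argument then reduces to reproducing three ingredients: an $\mathcal S$-existence theorem (the analogue of Theorem \ref{thm:LSG_existence_S}), a commutativity lemma (the analogue of Lemma \ref{lem:LSG_commutativity}), and the duality step itself.

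Before any of this can run, I would check the two structural facts on which the LSG machinery silently relied. First, $m_{QG}$ must lie in $L^\infty(\R^3)\cap\mathcal C^\infty(\R^3\setminus\{0\})$ and be even: this is immediate because $B=\mathrm{diag}(1,1,N^2)$ is positive definite, so $x\cdot B^{-1}x>0$ for $x\ne0$, and $m_{QG}$ is a $0$-homogeneous ratio of quadratic forms; evenness then guarantees $\mathscr F\mathscr M_{m_{QG}}\mathscr F^{-1}=\mathscr F^{-1}\mathscr M_{m_{QG}}\mathscr F$, which is exactly what makes $\mathscr K_{QG}$ the genuine adjoint of $\mathscr L_{QG}$. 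Second, the flow $x\mapsto e^{tM}x$ must be volume-preserving; this holds because $\Tr M=\Tr\big(J(A-B)\big)=0$ for any symmetric $A-B$, so $\det e^{tM}=1$ and the transport propagator is an $L^2$-isometry. With these in hand, the seminorm estimates \eqref{eq:bound_T1}--\eqref{eq:bound_T3} and the $\mathcal S$-existence proof of Theorem \ref{thm:LSG_existence_S} carry over unchanged, yielding a strong solution $\psi(t)=G_{QG}(t)\psi_0$ in $\mathcal S$ for each $\psi_0\in\mathcal S$, and likewise for $F_{QG}$ on the adjoint problem.

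The heart of the matter, and the step I expect to require the most care, is the commutativity identity $\mathscr L_{QG}\,G_{QG}(t)=G_{QG}(t)\,\mathscr L_{QG}$ on $\mathcal S$. As in Lemma \ref{lem:LSG_commutativity}, the multiplication pieces $-M^T$ and $e^{tM^T}$ commute with everything, and after expanding the two pseudo-differential compositions the obstruction collapses to the single symbol identity
\begin{equation*}
M\xi\cdot\nabla_\xi\tilde M_{QG}(t;\xi)=\tilde M_{QG}(t;\xi)\big(\tilde m_{QG}(t;\xi)-m_{QG}(\xi)\big).
\end{equation*}
I would verify this through the logarithmic-derivative structure that already governs the LSG symbol: writing $d(x):=x\cdot B^{-1}x$ and differentiating along the flow $\eta(r)=e^{-rM^T}\xi$ gives $\tfrac{d}{dr}d(\eta(r))=-2\,\eta(r)\cdot MB^{-1}\eta(r)=-d(\eta(r))\,m_{QG}(\eta(r))$, whence $\tilde m_{QG}(r;\xi)=-\tfrac{d}{dr}\log d(\eta(r))$ and the closed form $\tilde M_{QG}(t;\xi)=d(\xi)/d(e^{-tM^T}\xi)$ follows, making the required cancellation transparent exactly as for LSG. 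The only point demanding attention is that here the quadratic form in the \emph{denominator} of the symbol is built from $B^{-1}$ while the drift is $M=J(A-B)$, so one must confirm that differentiating still produces precisely the numerator $x\cdot MB^{-1}x$ of $m_{QG}$; the computation above shows it does, by symmetry of $B^{-1}$.

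With existence in $\mathcal S$ and the commutativity lemma established, the $\mathcal S'$ statement follows by the duality argument of the proof of Theorem \ref{thm:LSG_existence_S'} line for line. Defining $F_{QG}(t)'$ on $\mathcal S'$ by $\ip{F_{QG}(t)'\eta}{\psi}=\ip{\eta}{F_{QG}(t)\psi}$, I would set $\eta(t):=F_{QG}(t)'\eta_0$ and compute, for every $\psi\in\mathcal S$,
\begin{equation*}
\ip{\tfrac{d}{dt}F_{QG}(t)'\eta_0}{\psi}=\ip{\eta_0}{\tfrac{d}{dt}F_{QG}(t)\psi}=\ip{\eta_0}{\mathscr K_{QG}F_{QG}(t)\psi}=\ip{\eta_0}{F_{QG}(t)\mathscr K_{QG}\psi}=\ip{\mathscr L_{QG}F_{QG}(t)'\eta_0}{\psi},
\end{equation*}
where the third equality is the commutativity lemma and the weak$^*$ differentiability in $\mathcal C^1(\R,\mathcal S')$ descends from the $\mathcal C^1(\R,\mathcal S)$ regularity of $t\mapsto F_{QG}(t)\psi$. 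This is precisely Definition \ref{de:LSG_solutions}(iv) applied to $\mathscr L_{QG}$, so $\eta(t)=F_{QG}(t)'\eta_0$ is the desired strong solution in $\mathcal S'$, which completes the proof.
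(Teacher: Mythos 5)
Your proposal follows exactly the route the paper intends: Theorem \ref{thm:LQG_existence_S'} is stated there with no separate proof beyond the remark that the LSG results of Sections \ref{sec:LSG_existence} transfer to LQG mutatis mutandis, and your argument is precisely that transfer, reproducing the $\mathcal S$-existence theorem, the commutativity lemma and the duality step with $S$, $m$, $\mathscr L$ replaced by $M$, $m_{QG}$, $\mathscr L_{QG}$. The structural checks you add --- that $m_{QG}\in L^\infty(\R^3)\cap\mathcal C^\infty(\R^3\setminus\{0\})$ is even, that $\Tr M=\Tr\big(J(A-B)\big)=0$ so the flow is volume-preserving, and the closed form $\tilde M_{QG}(t;\xi)=\big(\xi\cdot B^{-1}\xi\big)/\big(e^{-tM^T}\xi\cdot B^{-1}e^{-tM^T}\xi\big)$ obtained from the logarithmic-derivative identity --- are exactly the points the paper leaves implicit, and they are verified correctly.
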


We consider plane-wave solutions of LQG \eqref{eq:LQG} of the form \eqref{eq:LSG_planewavesol}, and consider the corresponding concepts of stability to plane-wave perturbations, as we did in Definition \ref{de:LSG_stability}. 

\begin{thm}\label{thm:LQG_stability}
    The steady solution $\bar \phi$ of QG \eqref{eq:QG2} defined in \eqref{eq:QG_steadysol} associated to a matrix $A$ is stable to plane-wave perturbations if it corresponds to an elliptic flow, i.e. $A\in\mathscr A_-^{QG}$ as in \eqref{eq:QG_flowsets}, and it is unstable to plane-wave perturbations if it corresponds to an hyperbolic flow, i.e. $A\in\mathscr A_+^{QG}$.
\end{thm}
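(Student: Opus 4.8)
The plan is to follow the proof of Theorem \ref{thm:LSG_stability} line by line, since every structural ingredient transfers: the operator $\mathscr L_{QG}$, the multiplier $m_{QG}$ and the symbols $\widetilde m_{QG},\widetilde M_{QG}$ play exactly the roles of $\mathscr L, m, \tilde m, \tilde M$, with the matrix $S$ replaced by $M=J(A-B)$ and the weight $A^{-1}$ replaced by $B^{-1}$. As in the SG case, the plane-wave solution associated to $(a_0,k_0)$ is
\begin{equation*}
 \phi(x,t)=a_0\,\widetilde M_{QG}(t;k_0)\,e^{-tM^T}k_0\,e^{2\pi i k_0\cdot e^{-tM}x},
\end{equation*}
so that $\|\phi(\cdot,t)\|_{L^{\infty}(\R^3,\R^3)}=|a_0|\,\widetilde M_{QG}(t;k_0)\,|e^{-tM^T}k_0|$, and the whole question reduces to controlling the two scalar factors $\widetilde M_{QG}(t;k_0)$ and $|e^{-tM^T}k_0|$ in the two flow regimes $\mathscr A_+^{QG}$ and $\mathscr A_-^{QG}$. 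I would split each regime into the generic case $m_{QG}\not\equiv0$ and the degenerate case $m_{QG}\equiv0$, exactly as the $\mathscr G/\mathscr B$ split is handled for SG.

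For the hyperbolic regime $A\in\mathscr A_+^{QG}$ one has $\sigma(M)=\{0,\lambda,-\lambda\}$ with $\lambda=\sqrt{\mu_{QG}}>0$. Taking $k_0^{\pm}$ an eigenvector of $M^T$ with eigenvalue $\pm\lambda$, the key identity $\widetilde m_{QG}(t;k_0^{\pm})=\pm2\lambda$ is in fact more immediate here than in SG: since $m_{QG}$ is homogeneous of degree $0$ and $e^{-tM^T}k_0^{\pm}=e^{\mp t\lambda}k_0^{\pm}$, one gets $\widetilde m_{QG}(t;k_0^{\pm})=m_{QG}(k_0^{\pm})$, and the transpose identity $k_0^{\pm}\cdot MB^{-1}k_0^{\pm}=(M^Tk_0^{\pm})\cdot B^{-1}k_0^{\pm}=\pm\lambda\,(k_0^{\pm}\cdot B^{-1}k_0^{\pm})$ yields $m_{QG}(k_0^{\pm})=\pm2\lambda$ directly, with no need for the antisymmetry cancellation used in the SG computation. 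Hence $\widetilde M_{QG}(t;k_0^{\pm})=e^{\pm2t\lambda}$ and $\|\phi(\cdot,t)\|_{L^{\infty}}=|a_0||k_0^{\pm}|e^{\pm t\lambda}$, which is unbounded for the choice $k_0=k_0^{+}$, giving instability. In the degenerate subcase $m_{QG}\equiv0$ the amplitude $a$ is constant and $\phi(x,t)=a_0e^{-tM^T}k_0$; choosing $k_0$ in the eigenspace of the negative eigenvalue makes $|e^{-tM^T}k_0|=e^{t\lambda}|k_0|$ unbounded, so the flow is still unstable.

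For the elliptic regime $A\in\mathscr A_-^{QG}$ one has $\sigma(M)=\{0,i\lambda,-i\lambda\}$ with $\lambda=\sqrt{-\mu_{QG}}>0$; since $M^T$ is diagonalisable with these distinct eigenvalues, $k(t)=e^{-tM^T}k_0$ is bounded and $\tau$-periodic with $\tau=2\pi/\lambda$, so $|e^{-tM^T}k_0|$ is automatically controlled and everything hinges on showing $\int_0^{\tau}m_{QG}(k(r))\,dr=0$, which keeps $\widetilde M_{QG}(t;k_0)=e^{\int_0^t m_{QG}(k(r))\,dr}$ bounded. Here I would use a clean logarithmic-derivative identity: setting $q(t):=k(t)\cdot B^{-1}k(t)$, one computes $q'(t)=-2\,k(t)\cdot MB^{-1}k(t)$, so that
\begin{equation*}
 m_{QG}(k(t))=2\,\frac{k(t)\cdot MB^{-1}k(t)}{k(t)\cdot B^{-1}k(t)}=-\frac{q'(t)}{q(t)}=-\frac{d}{dt}\ln q(t).
\end{equation*}
Because $k$ is $\tau$-periodic and $k_0\neq0$ keeps $q$ bounded away from $0$, the period integral telescopes to $-\ln\!\big(q(\tau)/q(0)\big)=0$, giving boundedness of $\phi$; the degenerate subcase $m_{QG}\equiv0$ is immediate since then $|\phi(x,t)|=|a_0||k(t)|$. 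This streamlines the SG argument, which expanded the numerator into trigonometric terms and checked $c_1+2c_5=0$, because for QG the matrix $M$ sits directly inside $MB^{-1}$ and no antisymmetric cross-term survives.

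The steps above are essentially bookkeeping once the eigenstructure of $M$ and the representation formula are in hand; the only point requiring genuine care — and the one I would treat as the main obstacle — is verifying that the two pairings $k_0^{\pm}\cdot MB^{-1}k_0^{\pm}$ (hyperbolic) and $k(t)\cdot MB^{-1}k(t)$ (elliptic) reproduce, with $B^{-1}$ in place of $A^{-1}$ and $M$ in place of $S$, exactly the sign conventions that drive the SG dichotomy. Concretely, one must confirm that $q'(t)=-2\,k(t)\cdot MB^{-1}k(t)$ holds with no extra term (it does, since $B^{-1}$ is symmetric and $M$ appears undifferentiated) and that the eigenvalues $\pm\sqrt{\mu_{QG}}$ of $M$ align with the labels $\mathscr A_{\pm}^{QG}$; granting these, the growth/decay and boundedness conclusions follow verbatim from the proof of Theorem \ref{thm:LSG_stability}.
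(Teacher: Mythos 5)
Your proof is correct, and since the paper leaves Theorem \ref{thm:LQG_stability} as an adaptation of the proof of Theorem \ref{thm:LSG_stability}, the right benchmark is that SG argument. Your hyperbolic case and both degenerate subcases follow it essentially verbatim (the identity $k_0^{\pm}\cdot MB^{-1}k_0^{\pm}=(M^Tk_0^{\pm})\cdot B^{-1}k_0^{\pm}=\pm\lambda\,k_0^{\pm}\cdot B^{-1}k_0^{\pm}$ is exactly the QG analogue of the eigenvector computation in the SG proof). The one place you genuinely diverge is the elliptic case: the paper establishes the zero-mean property $\int_0^{\tau}m(k(r))\,dr=0$ by diagonalising $S^T$, expanding $k(t)\cdot A^{-1}Jk(t)$ into a trigonometric polynomial, exhibiting an explicit primitive, and checking the coefficient identity $c_1+2c_5=0$; you instead observe that $m_{QG}(k(t))=-\tfrac{d}{dt}\ln\bigl(k(t)\cdot B^{-1}k(t)\bigr)$ is an exact derivative of a $\tau$-periodic function bounded away from zero, so the period integral vanishes by inspection. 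This is a real simplification: it avoids the change of basis and the coefficient bookkeeping entirely, and it explains \emph{why} the mean is zero rather than verifying it. It is worth noting that the same trick applies to the SG case, since $q(t)=k(t)\cdot A^{-1}k(t)$ satisfies $q'=-2k\cdot SA^{-1}k=-2k\cdot A^{-1}Jk$ (the term $k\cdot A^{-1}JA^{-1}k$ vanishes because $A^{-1}JA^{-1}$ is antisymmetric), i.e.\ $m(k(t))=-\tfrac{d}{dt}\ln q(t)$ there too; so your argument would also streamline the paper's proof of Theorem \ref{thm:LSG_stability}.
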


\subsection{Stability in SG and QG}

We showed that the matrix-parameterised family of common steady solutions for SG and QG defined in \eqref{eq:steadysol} and \eqref{eq:QG_steadysol} exhibit different behaviours depending on the choice of the positive definite symmetric matrix $A$, and in either approximations we partitioned the set of such matrices into two open subsets (as we already mentioned that the sets $\mathscr A_0$ and $\mathscr A_0^{QG}$ are not interesting). In fact, the two partitions do not coincide, and therefore they allow four different possibilities: referring to the sets defined in \eqref{eq:flowsets} and \eqref{eq:QG_flowsets}, we have that either
\begin{itemize}
 \item [(i)] $A\in \mathscr A_{+}\cap \mathscr A_{+}^{QG}$, or
 \item [(ii)] $A\in \mathscr A_{+}\cap \mathscr A_{-}^{QG}$, or
 \item [(iii)] $A\in \mathscr A_{-}\cap \mathscr A_{+}^{QG}$, or
 \item [(iv)] $A\in \mathscr A_{-}\cap \mathscr A_{-}^{QG}$.
\end{itemize}

Each of the interections above is non-empty and contains at least one matrix that generates a non-trivial flow. The following matrices are examples of elements in the intersections above, respectively:
\begin{align*}
 A_1=\begin{pmatrix}
      \frac 12 & -1 & -1 \\
      -1 & 4 & 1 \\
      -1 & 1 & 3
     \end{pmatrix}, \quad
 A_2= \begin{pmatrix}
       2 & 0 & -1 \\
       0 & 2 & 0 \\
       -1 & 0 & \frac 34
      \end{pmatrix},\quad
 A_3=\begin{pmatrix}
      \frac 12 & -1 & -1\\
      -1 & 3 & 3 \\
      -1 & 3 & \frac 72
     \end{pmatrix}, \quad
 A_4=\begin{pmatrix}
      \frac 12 & 0 & -1\\
      0 & \frac 12 & 0 \\
      -1 & 0 & 3
     \end{pmatrix}.
\end{align*}
The fact that the intersections (ii) and (iii) are non-empty suggests that, although SG and QG are heuristically derived to model the same phenomena, they in fact exhibit different behaviours. 

\section{Final Remarks}

We introduced the linearisation of the semi-geostrophic equation \eqref{eq:SG} for globally-defined strictly convex quadratic steady solutions and presented existence results on $L^2(\R^3,\R^3)$, $\mathcal S(\R^3,\R^3)$ and $\mathcal S'(\R^3,\R^3)$. We discussed the stability of such steady solutions, by introducing the concept of stability to plane-wave perturbations, following the approach in \cite{CC86:NS_planewaves}. We also notice that the same techniques can be applied to the well-known quasi-geostrophic equations, and drew a comparison between the long-term behaviour of the same solutions in the two models, LSG \eqref{eq:LSG} and LQG \eqref{eq:LQG}. 

The authors believe that this work is a first step in analysing the dynamical stability of steady solutions to SG \eqref{eq:SG}. In particular, it would be interesting to consider more general steady solutions: in Section \ref{sec:LSG_deriv}, the reader can observe that the derivation makes use of the fact that the Hessian $D^2\overline P=A$ of the steady geopotential is constant, and lifting this assumption would lead to a different and less explicit form for the operator $\mathscr L$ defined in \eqref{eq:LQG_L}. Another issue is to construct non-trivial steady solutions of SG \eqref{eq:SG} both on the whole space $\R^3$ and on a domain with boundary: quadratic steady solutions can be constructed if the domain is a ball, but more interesting domains require more complicated solutions, which make it harder to keep  calculations as explicit as they are in this paper. 

Another point on which the authors would like to draw attention is the concept of stability: our definition of stability to plane-wave perturbations arises from the idea of perturbing the steady solution and studying the long-term behaviour of the perturbation in order to infer something about the behaviour of solutions which are ``close'' to the steady state. Firstly, we want to remind the reader that a rigorous stability analysis requires a well-posedness theory for the nonlinear equation \eqref{eq:SG} that is not available at the moment. Secondly, the concept of stability can be extended to more general perturbations. Again, this can require a more abstract approach, whereas the plane-wave solutions considered in this paper allow for a more explicit investigation of stability.

\section*{Acknowledgments}

S. Lisai was supported by The Maxwell Institute Graduate School in Analysis and its
Applications (MIGSAA) funded by the UK Engineering and Physical
Sciences Research Council (grant EP/L016508/01), the Scottish Funding Council, Heriot-Watt
University and the University of Edinburgh.
M. Wilkinson was supported by the EPSRC Standard Grant EP/P011543/1.
  

\addcontentsline{toc}{section}{References}
\bibliographystyle{alpha}
\bibliography{biblio}

\end{document}